\documentclass[a4paper,fleqn]{cas-sc}

\usepackage[numbers]{natbib}

\def\tsc#1{\csdef{#1}{\textsc{\lowercase{#1}}\xspace}}
\tsc{WGM}
\tsc{QE}
\usepackage{tikz}
\usetikzlibrary{arrows.meta}
\usepackage{placeins}
\usepackage{xcolor}

\newtheorem{theorem}{Theorem}
\newtheorem{lemma}{Lemma}
\newdefinition{rmk}{Remark}
\newdefinition{definition}{Definition}
\newdefinition{proposition}{Proposition}
\newdefinition{corollary}{Corollary}
\newdefinition{example}{Example}
\newproof{pf}{Proof}
\newproof{pf-full}{Proof in the full-dimensional case}
\newproof{pot1}{Proof of Theorem~\ref{thm:subsets-of-unit-hypercube}}
\newproof{pot2}{Proof of Theorem~\ref{thm:functions-on-unit-hypercube}}
\newproof{pot3}{Proof of Theorem~\ref{thm:local-l-extendability-on-lattice}}

\newtheorem{appthm}{Theorem}[section]
\newtheorem{applemma}[appthm]{Lemma}

\newproof{polro}{Proof of Lemma~\ref{lem:halfUJ_triangulable_implies_relative_opposite_pair}}
\newproof{pol:lem:argmin_of_local_convex_extension}{Proof of Lemma~\ref{lem:argmin_of_local_convex_extension}}
\newproof{pol:lem:intersection_of_argmin_convex_hulls}{Proof of Lemma~\ref{lem:intersection_of_argmin_convex_hulls}}
\newproof{pol:lem:cube-d-convex-affinity-domain-characterization}{Proof of Lemma~\ref{lem:cube-d-convex-affinity-domain-characterization}}
\newproof{pol:lem:local-extension-commutes-with-cube-restriction}{Proof of Lemma~\ref{lem:local-extension-commutes-with-cube-restriction}}

\newcommand{\argmin}{\mathop{\rm argmin}\limits}

\newcommand{\dom}{\operatorname{dom}}

\usepackage[mathlines]{lineno}

\ExplSyntaxOn
\cs_set:Npn \__first_footerline:
  { \group_begin: \small \sffamily \__short_authors: \group_end: }
\ExplSyntaxOff

\begin{document}
\let\WriteBookmarks\relax

\shorttitle{A Characterization of Functions on the Integer Lattice That Are L-Extendable through Local Convex Extensions}    

\shortauthors{Yokoyama et al.}  

\title [mode = title]{A Characterization of Functions on the Integer Lattice That Are L-Extendable through Local Convex Extensions}  



%

\author[1]{Ken Yokoyama}[orcid=0009-0005-6641-9394]

\cormark[1]


\ead{yokoyama@agent.inf.kyushu-u.ac.jp}



\affiliation[1]{organization={Kyushu University},
            addressline={744 Motooka, Nishi-ku}, 
            city={Fukuoka},
            postcode={819-0395}, 
            country={Japan}}

\author[2]{Yuni Iwamasa}




\author[1]{Kei Kimura}

\affiliation[2]{organization={Kobe University},
            addressline={1-1, Rokkodai-cho, Nada-ku}, 
            city={Kobe},
            postcode={657-8501}, 
            country={Japan}}

\cortext[1]{Corresponding author}



\begin{abstract}
$\mathrm{L}$-extendable functions on the integer lattice are functions that admit a \(\frac12\mathrm{UJ}\)-convex relaxation; that is, values can be assigned to the additional half-integral points, while keeping the original values at integer points, so that the resulting function is \(\frac12\mathrm{UJ}\)-convex.
Various examples of \(\mathrm{L}\)-extendable functions and their applications are known.
A general characterization of \(\mathrm{L}\)-extendability for functions on the integer lattice, however, remains open.

In this paper, we study the \(\mathrm{L}\)-extendability of functions on the integer lattice.
We first obtain a necessary condition: \(\mathrm{L}\)-extendable functions are integrally convex.
We then consider a special subclass of
\(\mathrm{L}\)-extendable functions, namely, those whose local convex
extensions give \(\mathrm{L}\)-convex relaxations.
This subclass has a useful optimization property: 
the optimal value of the original function coincides with that of the
\(\frac12\mathrm{UJ}\)-convex relaxation given by its local convex extension, a property not guaranteed for general
\(\frac12\mathrm{UJ}\)-convex relaxations.
We characterize this subclass by showing that a function is \(\mathrm{L}\)-extendable through its local convex extension if and only if it is integrally convex and its restriction to every unit hypercube is \(\mathrm{D}\)-convex, where \(\mathrm{D}\)-convexity is a discrete convexity associated with reflected Freudenthal simplicial divisions.
This subclass strictly contains the class of \(\mathrm{D}\)-convex functions.
We also give a combinatorial characterization of
\(\mathrm{D}\)-convex subsets of the vertices of a unit hypercube:
a set \(X\subseteq\{0,1\}^n\) is \(\mathrm{D}\)-convex if and only if
it is majority-closed and contains a relative opposite pair.
\end{abstract}



\begin{keywords}
Discrete convex analysis \sep
\(\mathrm{L}\)-extendability \sep
Integrally convex functions \sep
Local convex extension \sep
\(\mathrm{D}\)-convexity \sep
\(\mathrm{UJ}\)-convexity
\end{keywords}

\maketitle

\section{Introduction}\label{sec:introduction}

Discrete convex analysis extends concepts and methods of convex analysis to functions on discrete domains, providing a unified framework for nonlinear discrete optimization. Fundamental classes include submodular functions, \(\mathrm{L}^{\natural}\)-convex functions, \(\mathrm{M}^{\natural}\)-convex functions, and integrally convex functions; see, e.g., \cite{murota2003discrete}.  
Among these, integrally convex functions form a broad class that includes a number of standard classes of discrete convex functions.
They are characterized by the convexity of the \emph{local convex extension}, obtained by applying convexification locally on each integral unit hypercube \cite{FavatiTardella1990,murota2023recent}.
This definition gives a direct connection between functions on the integer lattice and ordinary convex functions on \(\mathbb R^n\).

Discrete convex analysis has also been extended from the integer lattice to more general graph structures.
Hirai~\cite{hirai2016discrete,hirai2018convexity} developed the theory of \(\mathrm{L}\)-convexity on oriented modular graphs, providing a framework that encompasses several discrete convexities, including \(k\)-submodularity \cite{huber2012towards}, skew-bisubmodularity \cite{huber2014skew}, tree-submodularity \cite{kolmogorov2011submodularity}, and \(\mathrm{UJ}\)-convexity \cite{fujishige2014bisubmodular}.

Within this graph-theoretic framework, a function is called \emph{\(\mathrm{L}\)-extendable} if it admits an \(\mathrm{L}\)-convex extension to an appropriate refinement of its underlying graph.
Such an extension is called an \(\mathrm{L}\)-convex relaxation and agrees with the original function at every original vertex.
$\mathrm{L}$-convex relaxations have applications to
Potts energy minimization~\cite{gridchyn2013potts},
fixed-parameter tractable algorithms~\cite{iwata2016half},
and multiflow and network design problems~\cite{hirai2015extendable};
see also~\cite{hirai2016k}.

For the integer lattice considered in this paper, we regard $\mathbb{Z}^n$ as the Cartesian product of $n$ copies of the path graph on $\mathbb{Z}$, with edges joining consecutive integers.
For this graph structure, $\mathrm{L}$-convex relaxations are precisely $\frac12\mathrm{UJ}$-convex relaxations on the half-integral lattice $(\frac12\mathbb{Z})^n$ ~\cite[Example~4, Theorem~4.5, and Example~12]{hirai2018convexity}.

Although various concrete examples of \(\mathrm{L}\)-extendable functions and their applications are known, a general structural understanding of \(\mathrm{L}\)-extendability is still lacking.
In particular, a complete characterization of functions on the integer lattice that admit an \(\mathrm{L}\)-convex relaxation remains open in arbitrary dimension.
In dimension two, however, \(\mathrm{L}\)-extendable functions are known to coincide with locally discrete midpoint convex functions~\cite[Remark~5]{moriguchi2020discrete}.
For functions on the vertices of a unit hypercube, a result of
Hirai and Iwamasa~\cite{hirai2016k} implies that a function on
\(\{0,1\}^n\) is \(\mathrm{L}\)-extendable if and only if its
effective domain is majority-closed.

Motivated by the lack of a general characterization,
our aim is to develop structural characterizations of
$\mathrm{L}$-extendable functions on the integer lattice.
As a first step toward this goal, we establish a general necessary condition: every \(\mathrm{L}\)-extendable function on the integer lattice is integrally convex.
This implication holds for any choice of an \(\mathrm{L}\)-convex relaxation.

Beyond the existence of an \(\mathrm{L}\)-convex relaxation, it is important for minimization whether the relaxation is exact.
A relaxation is called exact if it preserves the optimal value of the original problem.
The local convex extension always preserves this optimal
value, both on $\mathbb R^n$ and when restricted to the
half-integral lattice.
Indeed, every finite value of the extension is a convex
combination of values of the original function at neighboring
integer points, while the extension agrees with the original
function on $\mathbb Z^n$.
Consequently, whenever its restriction to the half-integral
lattice is $\frac12\mathrm{UJ}$-convex, it yields an exact
$\mathrm{L}$-convex relaxation.
Motivated by this optimization property, our goal is to characterize the functions on the integer lattice that are $\mathrm{L}$-extendable through their local convex extensions.

Our characterization involves \emph{$\mathrm{D}$-convex functions} introduced by Fujishige~\cite{fujishige2014bisubmodular}.
This class is based on discordant Freudenthal simplicial divisions, obtained by reflecting the Freudenthal division on each unit hypercube in a compatible way.
It includes $\mathrm{L}^{\natural}$-convex and $\mathrm{UJ}$-convex functions on the integer lattice ~\cite[Section~5.4]{fujishige2022discrete}.
A nonempty set of integer points is called  $\mathrm{D}$-convex if its indicator function, which takes the value zero on the set and positive infinity elsewhere, is  $\mathrm{D}$-convex.

We first show that the indicator function of a nonempty set $X\subseteq\{0,1\}^n$ is $\mathrm{L}$-extendable through its local convex extension if and only if $X$ is a $\mathrm{D}$-convex set.
We further characterize this condition combinatorially:
$X$ is $\mathrm{D}$-convex if and only if it is majority-closed
and contains a relative opposite pair.
This characterization can be compared directly with that of
Hirai and Iwamasa~\cite{hirai2016k}, which states that the indicator
function of $X$ admits an $\mathrm{L}$-convex relaxation if and only if
$X$ is majority-closed.

We then extend the equivalence with $\mathrm{D}$-convexity
from indicator functions to arbitrary functions
on the vertices of a unit hypercube.
That is, such a function is $\mathrm{L}$-extendable through
its local convex extension if and only if it is a
$\mathrm{D}$-convex function.

Finally, we establish our main characterization on the
integer lattice, extending the result for a unit hypercube.
Here, local $\mathrm{D}$-convexity is supplemented by
integral convexity as a global condition.
We prove that a function on the integer lattice is
$\mathrm{L}$-extendable through its local convex extension
if and only if it is integrally convex and its restriction
to every unit hypercube is $\mathrm{D}$-convex.
We also show that this class strictly contains
the class of $\mathrm{D}$-convex functions on the integer lattice.
Unlike $\mathrm{D}$-convexity on the integer lattice,
$\mathrm{D}$-convexity of the unit hypercube restrictions
does not require compatible choices of coordinate reflections.

\section{Preliminaries}
\label{sec:preliminaries}

In this section, we recall the basic notions of integral
convexity, \(\mathrm{L}\)-extendability, \(\frac12\mathrm{UJ}\)-convexity, and \(\mathrm{D}\)-convexity used throughout the paper.

\subsection{Notation}

Let $\mathbb Z$ and $\mathbb R$ denote the sets of integers and real numbers, respectively.
We write $\frac12\mathbb Z:=\{k/2\mid k\in\mathbb Z\}$ for the set of half-integers.
Throughout the paper, \(n\) is a positive integer, and functions are
allowed to take the value \(+\infty\).
For \(a\in\mathbb R\), we use the conventions
\(a<+\infty\) and
\(a+(+\infty)=(+\infty)+a=+\infty\).
We regard \(+\infty\le+\infty\) and
\(+\infty\ge+\infty\) as valid inequalities.
In convex combinations, a term with zero coefficient is understood not
to contribute, even when the corresponding function value is
\(+\infty\).

For a function
\(f:\mathbb Z^n\to\mathbb R\cup\{+\infty\}\),
its effective domain is
\(
\operatorname{dom} f
:=
\{x\in\mathbb Z^n\mid f(x)<+\infty\}.
\)
We allow \(\operatorname{dom} f=\emptyset\).
In this case, \(f(x)=+\infty\) for every \(x\in\mathbb Z^n\), and,
whenever an extension is considered, the function taking the value
\(+\infty\) at every point of the extended domain is admissible.
For a function \(f\) and a subset \(A\) of its domain, we denote the restriction of \(f\) to \(A\) by \(f|_A\).

For a vector \(x\) (in \(\mathbb{R}^n\) or \(\mathbb{Z}^n\)), we denote its \(i\)-th component by \(x_i\).
For \(x=(x_1,\ldots,x_n)\in\mathbb R^n\), define
\(\lfloor x\rfloor:=(\lfloor x_1\rfloor,\ldots,\lfloor x_n\rfloor)\)
and
\(\lceil x\rceil:=(\lceil x_1\rceil,\ldots,\lceil x_n\rceil)\).
We write \([n]:=\{1,\ldots,n\}\), and, for each \(i\in[n]\), let
\(e_i\in\mathbb Z^n\) denote the \(i\)th unit vector.
For \(A\subseteq[n]\), let
\(\chi_A\in\{0,1\}^n\) denote the characteristic vector of \(A\).
We denote the all-zero and all-one vectors in \(\mathbb Z^n\) by
\(
\mathbf 0:=(0,\ldots,0),~
\mathbf 1:=(1,\ldots,1),
\)
respectively.
For \(p,x\in\mathbb R^n\), let
\(\langle p,x\rangle:=\sum_{i=1}^n p_i x_i\).
For \(p\in\mathbb R^n\), the linear perturbation of \(f\) by \(p\)
is defined by
\(f[-p](x):=f(x)-\langle p,x\rangle\) for \(x\in\mathbb Z^n\).

For a set $S\subseteq\mathbb R^n$, we denote its convex hull by $\overline{S}$.
The closed convex hull is denoted by $\operatorname{cl}(\overline{S})$.

\subsection{Integral convexity and local convex extensions}

We recall the basic notions of integral convexity used in this paper.
For further background on integrally convex sets and functions, see~\cite{murota2003discrete,FavatiTardella1990,murota2023recent}.

For \(x\in\mathbb R^n\), the \emph{integral neighborhood} of \(x\) is
defined by
\begin{equation}
N(x)
:=
\{z\in\mathbb Z^n\mid |x_i-z_i|<1 \ (i=1,\ldots,n)\}.
\label{eq:integral-neighborhood}
\end{equation}
Equivalently,
\(
N(x)
=
\{z\in\mathbb Z^n\mid
\lfloor x_i\rfloor\le z_i\le\lceil x_i\rceil
\ (i=1,\ldots,n)\}.
\)

For a set \(S\subseteq\mathbb Z^n\) and \(x\in\mathbb R^n\), the convex
hull \(\overline{S\cap N(x)}\) is called the \emph{local convex hull} of
\(S\) around \(x\).  A nonempty set \(S\subseteq\mathbb Z^n\) is called
\emph{integrally convex} if the union of its local convex hulls is convex,
or equivalently,
\begin{equation}
\overline{S}
=
\bigcup_{x\in\mathbb R^n}
\overline{S\cap N(x)}.
\label{eq:integrally-convex-set}
\end{equation}

For a function
\(f:\mathbb Z^n\to\mathbb R\cup\{+\infty\}\), its
\emph{local convex extension}
\(\tilde f:\mathbb R^n\to\mathbb R\cup\{+\infty\}\) is defined by
\begin{equation}
\tilde f(x)
:=
\min
\left\{
\sum_{y\in N(x)}\lambda_y f(y)
\ \middle|\
\sum_{y\in N(x)}\lambda_y y=x,\ 
(\lambda_y)\in\Lambda(x)
\right\}
\qquad
(x\in\mathbb R^n),
\label{eq:local-convex-extension}
\end{equation}
where
\begin{equation}
\Lambda(x)
:=
\left\{
(\lambda_y\mid y\in N(x))
\ \middle|\
\sum_{y\in N(x)}\lambda_y=1,\ 
\lambda_y\ge0 \ (y\in N(x))
\right\}.
\label{eq:lambda-local-convex-extension}
\end{equation}
Thus \(\tilde f(x)\) is the value at \(x\) of the convexification of
the restriction of \(f\) to \(N(x)\);
see \cite{FavatiTardella1990} and
\cite[Sections~3.1--3.2]{murota2023recent}.
A function
\(f:\mathbb Z^n\to\mathbb R\cup\{+\infty\}\) is called
\emph{integrally convex} if its local convex extension \(\tilde f\) is
convex on \(\mathbb R^n\).  Since \(N(x)=\{x\}\) for
\(x\in\mathbb Z^n\), we have
\(\tilde f(x)=f(x)\) for all \(x\in\mathbb Z^n\).

\subsection{Discrete convexity based on simplicial divisions}
\label{subsec:discrete-convexity-simplicial-divisions}

The \(\mathrm{L}^{\natural}\)-convex, \(\mathrm{UJ}\)-convex, and
\(\mathrm{D}\)-convex functions considered in this subsection can all
be described through convex extensions associated with simplicial
divisions.
A common geometric foundation for these classes is the \emph{Freudenthal simplicial division} of the integer grid. 
We first recall this division.

For \(z\in\mathbb Z^n\) and a permutation \(\pi\) of \([n]\), let
\[
\Delta_{z,\pi}
:=
\overline{
\left\{
z,\,
z+e_{\pi(1)},\,
z+e_{\pi(1)}+e_{\pi(2)},\,
\ldots,\,
z+e_{\pi(1)}+e_{\pi(2)}+\cdots+e_{\pi(n)}
\right\}
}.
\]
Here, the permutation \(\pi\) specifies the order in which the
coordinate directions are added along the chain of vertices.
The Freudenthal simplicial division of \(z+[0,1]^n\) consists of the \(n\)-dimensional simplices \(\Delta_{z,\pi}\), where \(\pi\) ranges over all permutations of \([n]\), together with all of their faces~\cite{freudenthal1942simplicial}.
For \(\mathrm{L}^{\natural}\)-convex functions, convex extension to
\(\mathbb R^n\) is obtained by affine interpolation over this
simplicial division; see \cite[Section~7.7]{murota2003discrete} and \cite[Section~16.3]{fujishige2005submodular}.

More generally, local convex extensions admit a simplicial description in which each point is represented using vertices from its integral neighborhood.
By \cite[Lemma~1]{IimuraMurotaTamura2005}, for every finite integrally convex set \(X\subseteq\mathbb Z^n\), there exists a simplicial division \(\mathcal T\) of \(\overline X\) such that, for every \(y\in\overline X\), all vertices of the smallest simplex \(T(y)\in\mathcal T\) containing \(y\) belong to \(N(y)\).
For a broader discussion of simplicial divisions in the context of integral convexity, see~\cite[Section~3.5]{murota2023recent}.

\subsubsection{\(\mathrm{L}\)-extendability and
\(\frac12\mathrm{UJ}\)-convexity}
\label{subsec:l-extendability-half-uj-convexity}

The notion of \(\mathrm{UJ}\)-convexity is defined analogously by replacing the Freudenthal division with the Union-Jack division \cite{todd1977union} and extending functions by affine interpolation over this simplicial division \cite{fujishige2014bisubmodular}.
In this paper, we use the half-integral rescaling of the Union-Jack simplicial division, which we call the \emph{\(\frac12\mathrm{UJ}\) simplicial division}.
More precisely, on the fundamental cube it is obtained from the Union-Jack simplicial division of \([-1,1]^n\) by the affine map
\(
u\longmapsto \frac12\mathbf 1+\frac12u
\)
onto \([0,1]^n\)
\cite[Section~4]{fujishige2014bisubmodular}.
Since the resulting division is invariant under translations by integer vectors, it suffices to describe its restriction to \([0,1]^n\).
We next describe the $\frac12\mathrm{UJ}$ simplicial division and introduce the corresponding discrete midpoint inequality.

Let \(c:=\frac12\mathbf 1\).
For a choice of signed unit vectors
\(s_i\in\{e_i,-e_i\}\) \((i\in[n])\) and a permutation
\(\pi\) of \([n]\), the corresponding \(n\)-dimensional
\(\frac12\mathrm{UJ}\)-simplex in \([0,1]^n\) is
\begin{equation}
\overline{
\left\{
c,\ 
c+\frac12s_{\pi(1)},\ 
c+\frac12\bigl(s_{\pi(1)}+s_{\pi(2)}\bigr),\ 
\ldots,\ 
c+\frac12\bigl(s_{\pi(1)}+s_{\pi(2)}+\cdots+s_{\pi(n)}\bigr)
\right\}
}.
\label{eq:half-uj-simplex-signed-chain}
\end{equation}
The restriction of the
\(\frac12\mathrm{UJ}\) simplicial division to \([0,1]^n\)
consists of these \(n\)-dimensional simplices together with all
of their faces.
We call every simplex in the \(\frac12\mathrm{UJ}\) simplicial division a \emph{\(\frac12\mathrm{UJ}\)-simplex}.
Every \(n\)-dimensional \(\frac12\mathrm{UJ}\)-simplex in
\([0,1]^n\) has \(c\) as a vertex and
\(c+\frac12\sum_{i=1}^n s_i\) as its unique \(0\)-\(1\) vertex.
Moreover, the simplex lies in the closed orthant determined by
this \(0\)-\(1\) vertex with respect to the center \(c\).
More precisely, if \(v\in\{0,1\}^n\) is the unique
\(0\)-\(1\) vertex of an \(n\)-dimensional
\(\frac12\mathrm{UJ}\)-simplex \(T\subseteq[0,1]^n\), then every
\(u\in T\) satisfies
\begin{equation}
v_i=1
\Longrightarrow
u_i\geq\frac12,
\qquad
v_i=0
\Longrightarrow
u_i\leq\frac12
\qquad
(i\in[n]).
\label{eq:half-uj-simplex-orthant-containment}
\end{equation}

For a function
\(g:(\frac12\mathbb Z)^n\to\mathbb R\cup{+\infty}\),
we define its \emph{\(\frac12\mathrm{UJ}\) simplicial extension}
\(\hat g:\mathbb R^n\to\mathbb R\cup{+\infty}\)
by affine interpolation over the simplices of the
\(\frac12\mathrm{UJ}\) simplicial division.
For \(x\in\mathbb R^n\), consider the unique simplex of the division whose relative interior contains \(x\).
If \(g\) is finite at every vertex of this simplex, define \(\hat g(x)\) as the value at \(x\) of the unique affine function on its affine hull that agrees with \(g\) at every vertex; otherwise, set \(\hat g(x):=+\infty\).

To define \(\frac12\mathrm{UJ}\)-convexity in terms of a discrete midpoint condition, we introduce the following two \(\frac12\mathrm{UJ}\)-rounding operations.
For \(u\in\frac14\mathbb Z\), define \([u]_{\frac12},[u]_1\in\frac12\mathbb Z\) by
\begin{equation}
[u]_{\frac12}
:=
\begin{cases}
u, & u\in\frac12\mathbb Z,\\
u+\frac14, & u-\frac14\in\mathbb Z,\\
u-\frac14, & u+\frac14\in\mathbb Z,
\end{cases}
\qquad
[u]_1
:=
\begin{cases}
u, & u\in\frac12\mathbb Z,\\
u-\frac14, & u-\frac14\in\mathbb Z,\\
u+\frac14, & u+\frac14\in\mathbb Z.
\end{cases}
\label{eq:half-uj-roundings}
\end{equation}
Thus, both operations fix every half-integer. For
\(u\in\frac14\mathbb Z\setminus\frac12\mathbb Z\),
\([u]_{\frac12}\) is the unique nearest nonintegral half-integer
to \(u\), whereas \([u]_1\) is the unique nearest integer to \(u\).
For \(u\in(\frac14\mathbb Z)^n\), define
\([u]_{\frac12},[u]_1\in(\frac12\mathbb Z)^n\) componentwise by
\[
[u]_{\frac12}
:=
\left(
[u_1]_{\frac12},[u_2]_{\frac12},\dots,[u_n]_{\frac12}
\right),
\qquad
[u]_1
:=
\left(
[u_1]_1,[u_2]_1,\dots,[u_n]_1
\right).
\]

A function
\(g:(\frac12\mathbb Z)^n\to\mathbb R\cup\{+\infty\}\)
is called \emph{\(\frac12\mathrm{UJ}\)-convex} if
\begin{equation}
g(x)+g(y)
\geq
g\left(
\left[\frac{x+y}{2}\right]_{\frac12}
\right)
+
g\left(
\left[\frac{x+y}{2}\right]_{1}
\right)
\qquad
\left(
x,y\in\left(\frac12\mathbb Z\right)^n
\right).
\label{eq:half-uj-convexity}
\end{equation}

Hirai~\cite{hirai2018convexity} extended the classical theory of
\(\mathrm{L}^{\natural}\)-convex functions on the integer lattice
to \(\mathrm{L}\)-convex functions on oriented modular graphs.
We do not recall the general graph-theoretic definition, since
only the specialization to the half-integral lattice is needed
here; a similar specialization is used in
\cite[Section~6]{kimura2024neighborhood}.
In this specialization, \(\mathrm{L}\)-convexity is equivalent
both to the \(\frac12\mathrm{UJ}\)-convexity defined by
\eqref{eq:half-uj-convexity} and to the convexity of the corresponding \(\frac12\mathrm{UJ}\) simplicial extension.
The following proposition records these equivalences after
half-integral rescaling.

\begin{proposition}[{\cite[Theorem~4.5 and
Example~12]{hirai2018convexity}}]
\label{prop:half-uj-convex-extension-characterization}
Let
\(g:(\frac12\mathbb Z)^n\to\mathbb R\cup\{+\infty\}\),
and let \(\hat g:\mathbb R^n\to\mathbb R\{+\infty\}\) denote its
\(\frac12\mathrm{UJ}\) simplicial extension.
Then the following conditions are equivalent.
\begin{enumerate}
\item[\textup{(i)}]
The function \(g\) is \(\frac12\mathrm{UJ}\)-convex.


\item[\textup{(ii)}]
The function \(\hat g\) is convex on \(\mathbb R^n\).
\end{enumerate}
\end{proposition}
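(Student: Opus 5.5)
The proposition is cited from Hirai, but I would prove it directly, in the same way as the classical equivalence between discrete midpoint convexity and convexity of the Freudenthal (Lovász) extension for $\mathrm{L}^\natural$-convex functions. I first reduce to a local statement and then handle each direction.

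\textbf{(ii)$\Rightarrow$(i).} Fix $x,y\in(\frac12\mathbb Z)^n$ and put $m=\frac{x+y}{2}\in(\frac14\mathbb Z)^n$. The key geometric claim is that $[m]_{\frac12}$ and $[m]_1$ are the endpoints of a segment in a single $\frac12\mathrm{UJ}$-simplex, with midpoint $m$.
\begin{itemize}
\item In each coordinate where $m_i\notin\frac12\mathbb Z$, one rounding moves $m_i$ by $+\frac14$ and the other by $-\frac14$. Hence $m=\frac12([m]_{\frac12}+[m]_1)$.
\item Both points lie in the same half-integral unit cube. In the rescaled picture, $[m]_{\frac12}$ moves toward the cube centre in the coordinates that change, and $[m]_1$ moves toward the $0$-$1$ corner.
\item From the signed-chain description \eqref{eq:half-uj-simplex-signed-chain}, both points are vertices of a common $\frac12\mathrm{UJ}$-simplex: order the chain so that the coordinates already half-integral, together with the unchanged ones, come first.
\end{itemize}
Since $\hat g$ is affine on that simplex, $\hat g(m)=\frac12\bigl(g([m]_{\frac12})+g([m]_1)\bigr)$. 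Convexity of $\hat g$ gives $\hat g(m)\le\frac12(g(x)+g(y))$, and $\hat g=g$ on $(\frac12\mathbb Z)^n$, which yields \eqref{eq:half-uj-convexity}. The same computation covers the $+\infty$ cases.

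\textbf{(i)$\Rightarrow$(ii).} Here I would use the standard local-to-global principle: a function that is piecewise affine on a locally finite polyhedral subdivision is convex as soon as it is convex on a neighbourhood of every codimension-one face. One also needs $\operatorname{dom}\hat g$ to be convex. I would get this from (i) by showing that $\operatorname{dom} g$ is closed under both roundings. This makes $\operatorname{dom}\hat g$, the union of the simplices with all vertices in $\operatorname{dom} g$, connected through facets and locally convex, hence convex (Tietze--Nakajima).

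For the local condition, take two adjacent $n$-simplices $T,T'$ sharing a facet, with opposite vertices $p\in T$ and $p'\in T'$. Convexity across the facet amounts to one inequality: $g(p)+g(p')$ is at least the sum of $g$ at two suitable facet vertices $q,q'$ whose midpoint equals $\frac{p+p'}{2}$. In the signed-chain structure, adjacent simplices differ in one of two ways: by swapping two consecutive entries of $\pi$, or by flipping the last sign $s_{\pi(n)}$. In each case I would check that $\frac{p+p'}{2}\in(\frac14\mathbb Z)^n$ and that its two roundings are exactly $q$ and $q'$. Then \eqref{eq:half-uj-convexity} applied to $x=p$, $y=p'$ is precisely the required inequality.

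The main obstacle is this case analysis of adjacent simplex pairs. That includes pairs in neighbouring half-integral cubes whose centres differ, where the shared facet lies on a cube boundary, and the geometric identification of the rounding points as simplex vertices. I would first verify everything in the rescaled picture on $[-1,1]^n$, where the Union-Jack simplices are explicit, and then transport it by the affine map $u\mapsto\frac12\mathbf 1+\frac12u$.
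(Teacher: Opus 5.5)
The paper does not prove this proposition; it imports it from Hirai (Theorem~4.5 with Example~12). Your direct route is reasonable in outline, but (i)$\Rightarrow$(ii) has a genuine gap exactly where you are briefest: the convexity of $D:=\operatorname{dom}\hat g$, the union of all $\frac12\mathrm{UJ}$-simplices whose vertices lie in $S:=\operatorname{dom}g$.

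Closure of $S$ under both roundings is immediate from (i), but nothing in your plan turns it into local convexity of $D$. Your facet inequalities only involve pairs of simplices that both already lie in $D$. They therefore cannot see a reflex corner of $D$, where the simplex across a boundary facet is simply absent. Indeed, for $g=\delta_S$ every facet inequality reads $0\ge 0$. So for indicator functions the entire content of (i)$\Rightarrow$(ii) is the claim that $D$ is convex (equivalently $D=\overline S$). This is precisely the set-level result the paper imports separately from Iwamasa in the proof of Lemma~\ref{lem:halfUJ-triangulable-iff-convex}. Declaring $D$ ``connected through facets and locally convex'' thus assumes the proposition for $\delta_S$.

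To close the gap you must show, at every boundary point of $D$ (that is, for every link type of the $\frac12\mathrm{UJ}$ complex), that rounding-closure of $S$ excludes reflex configurations. One possible tool: for vertices $a,b$ of $D$ around a putative reflex point, the segment joining $[\frac{a+b}2]_{\frac12}$ and $[\frac{a+b}2]_1$ is an edge of the division contained in $D$ and passing through $\frac{a+b}2$. You also need an argument for connectivity.

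Furthermore, ``connected through facets'' presupposes that $D$ is pure of dimension $n$. For $n\ge2$, the set $S=\{\mathbf 0,\frac12\mathbf 1,\mathbf 1\}$ is closed under the roundings and has $D=[\mathbf 0,\mathbf 1]$, which contains no $n$-simplex at all. In such cases the local-to-global step must be run inside $\operatorname{aff}D$. You must then check that, for two $d$-simplices of the division sharing a $(d-1)$-face in a common $d$-flat, the two roundings of $\frac{p+p'}2$ lie in the shared face. Your $n$-dimensional case analysis does not cover this.

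The remaining parts are sound. Direction (ii)$\Rightarrow$(i) works after one correction to the chain ordering:
the coordinates with $m_i\in\mathbb Z$ must come first, then those with $m_i\notin\frac12\mathbb Z$, and those with $m_i\in\frac12\mathbb Z\setminus\mathbb Z$ last, since the latter already agree with the centre. Putting all unchanged coordinates first would push the latter to integers, so $[m]_{\frac12}$ would not be a chain vertex.

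The facet analysis you flag as the main obstacle is routine. Write $v^k:=c+\frac12(s_{\pi(1)}+\cdots+s_{\pi(k)})$. There are three moves: swapping $\pi(k),\pi(k+1)$; flipping $s_{\pi(n)}$; and crossing the facet opposite $c$ into the unit cube with centre $c+s_{\pi(1)}$. They give $p+p'=v^{k-1}+v^{k+1}$, $p+p'=2v^{n-1}$, and $p+p'=2v^1$, respectively, and in each case these are exactly the roundings of $\frac{p+p'}2$. So the real difficulty is the domain, not the adjacency bookkeeping.
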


A function
\(f:\mathbb Z^n\to\mathbb R\cup\{+\infty\}\)
is called \emph{\(\mathrm{L}\)-extendable}
(or \emph{\(\frac12\mathrm{UJ}\)-extendable})
if there exists a \(\frac12\mathrm{UJ}\)-convex function
\(g:(\frac12\mathbb Z)^n\to\mathbb R\cup\{+\infty\}\)
such that \(g(x)=f(x)\) for all \(x\in\mathbb Z^n\).
Such a \(g\) is called an \emph{\(\mathrm{L}\)-convex (or \(\frac12\mathrm{UJ}\)-convex) relaxation} of \(f\).

To characterize \(\mathrm{L}\)-extendability on a unit
hypercube, we use the result of Hirai and Iwamasa
\cite[Theorem~1]{hirai2016k} on the existence of
\(k\)-submodular relaxations.
In preparation for its specialization to \(k=2\), we introduce
\(2\)-submodularity and its relation to
\(\frac12\mathrm{UJ}\)-convexity.

For \(a,b\in\{0,1,2\}\), define
\begin{equation}
a\sqcap b
:=
\begin{cases}
a, & a=b,\\
0, & a\ne b,
\end{cases}
\qquad
a\sqcup b
:=
\begin{cases}
a, & b=0 \text{ or } a=b,\\
b, & a=0 \text{ and } b\ne0,\\
0, & \text{otherwise}.
\end{cases}
\label{eq:two-submodular-operations}
\end{equation}
The operations act componentwise on
\(\{0,1,2\}^n\).
A function
\(h:\{0,1,2\}^n\to\mathbb R\cup\{+\infty\}\)
is called \emph{\(2\)-submodular} if
\begin{equation}
h(a)+h(b)
\geq
h(a\sqcap b)+h(a\sqcup b)
\qquad
(a,b\in\{0,1,2\}^n).
\label{eq:two-submodularity}
\end{equation}
It is well known that \(2\)-submodularity coincides with
bisubmodularity; see, e.g.,
\cite{fujishige2005submodular,huber2012towards}.
We identify the three labels with the three half-integral points as follows.
Let
\(\varphi:\{0,1,2\}\to\{0,\frac12,1\}\) be defined by
\(\varphi(0)=\frac12\), \(\varphi(1)=0\), and
\(\varphi(2)=1\), and apply \(\varphi\) componentwise.
A coordinatewise check gives
\begin{equation}
\varphi(a\sqcap b)
=
\left[
\frac{\varphi(a)+\varphi(b)}{2}
\right]_{\frac12},
\qquad
\varphi(a\sqcup b)
=
\left[
\frac{\varphi(a)+\varphi(b)}{2}
\right]_{1}
\label{eq:two-submodular-half-uj-correspondence}
\end{equation}
for all \(a,b\in\{0,1,2\}^n\).
Since \(\{0,\frac12,1\}^n\) is closed under the two
\(\frac12\mathrm{UJ}\)-rounding operations, we also call a
function
\(g:\{0,\frac12,1\}^n\to\mathbb R\cup\{+\infty\}\)
\(\frac12\mathrm{UJ}\)-convex if
\eqref{eq:half-uj-convexity} holds for all
\(x,y\in\{0,\frac12,1\}^n\).
With this convention, the pullback \(g\circ\varphi\) is
\(2\)-submodular (equivalently, bisubmodular) if and only if \(g\) is
\(\frac12\mathrm{UJ}\)-convex.
For a function
\(f:\{0,1\}^n\to\mathbb R\cup\{+\infty\}\), a
\(\frac12\mathrm{UJ}\)-convex function
\(g:\{0,\frac12,1\}^n\to\mathbb R\cup\{+\infty\}\)
satisfying
\(
g(x)=f(x)
~(x\in\{0,1\}^n)
\)
is called a \emph{\(2\)-submodular relaxation} of \(f\) under the identification \(\varphi\).
We say that \(f\) is \(\mathrm{L}\)-extendable on the unit hypercube if it admits such a relaxation.

To state the characterization of
Hirai and Iwamasa~\cite[Theorem~1]{hirai2016k},
we recall the notion of a majority-closed set.
For \(x,y,z\in\{0,1\}^n\), let
\(\operatorname{maj}(x,y,z)\in\{0,1\}^n\) denote the
componentwise majority vector; that is, for each \(i\in[n]\),
\(\operatorname{maj}(x,y,z)_i\) is the value appearing at least
twice among \(x_i,y_i,z_i\).
A set \(X\subseteq\{0,1\}^n\) is called
\emph{majority-closed} if
\[
\operatorname{maj}(x,y,z)\in X
\qquad
(x,y,z\in X).
\]

Hirai and Iwamasa~\cite{hirai2016k} gave the following characterization of \(\mathrm{L}\)-extendability on the unit hypercube.
\begin{lemma}[Hirai--Iwamasa~{\cite[Theorem~1]{hirai2016k}},
\(k=2\)]
\label{cor:hirai-iwamasa-majority-closure}
Let \(f:\{0,1\}^n\to\mathbb R\cup\{+\infty\}\).
Then \(f\) is \(\mathrm{L}\)-extendable on the unit hypercube, or equivalently, admits a \(2\)-submodular relaxation, if and only if \(\dom f\) is majority-closed.
\end{lemma}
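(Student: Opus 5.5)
The plan is to obtain the lemma as a direct specialization of \cite[Theorem~1]{hirai2016k}, rather than to reprove it. I would first restate that theorem in the form needed. A function on a product of finite label sets, here $\{0,1\}^n$ viewed inside $\{0,1,2\}^n$ with labels $1,2$ the ``non-zero'' labels and $0$ the added centre label, admits a $2$-submodular relaxation on $\{0,1,2\}^n$ if and only if its effective domain is closed under a prescribed ternary operation. For $k=2$ and Boolean coordinates, that operation is the coordinatewise majority. So the first step is bookkeeping. The original vertices $\{0,1\}^n$ correspond under $\varphi$ to $\{1,2\}^n$, and the relaxation lives on $\{0,\frac12,1\}^n=\varphi(\{0,1,2\}^n)$. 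By \eqref{eq:two-submodular-half-uj-correspondence}, $g\circ\varphi$ is $2$-submodular exactly when $g$ is $\frac12\mathrm{UJ}$-convex. Hence ``$2$-submodular relaxation of $f$'' in the sense of Hirai--Iwamasa matches the notion defined above, and this also justifies the phrase ``or equivalently''.

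The second step is to check that the closure condition in \cite{hirai2016k}, specialized to two non-zero labels per coordinate, is literally majority-closedness of $\dom f$. On a coordinate with values in $\{1,2\}$, the relevant ternary operation returns the value occurring at least twice. This is immediate coordinatewise, and transporting back by $\varphi$ gives $\operatorname{maj}$ on $\{0,1\}^n$.

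As a sanity check I would also sketch the easy direction directly. Suppose $g$ is a relaxation and $x,y\in\dom f$. Then $(x+y)/2\in\{0,\frac12,1\}^n$ is fixed by both roundings, so \eqref{eq:half-uj-convexity} gives $g((x+y)/2)\le \frac12(f(x)+f(y))<+\infty$. Next, combine this midpoint $m$ with $z\in\dom f$. On coordinates where $x_i\ne y_i$ we have $m_i=\frac12$, and $[\cdot]_1$ returns $z_i$, which agrees with the majority there. On coordinates where $x_i=y_i$, the majority is $x_i$. A single such step does not land on $\{0,1\}^n$, because a $\frac12$ remains wherever $x_i=y_i\ne z_i$. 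I would therefore iterate with the symmetric midpoints $(x+z)/2$ and $(y+z)/2$ and use finiteness propagation to reach $\operatorname{maj}(x,y,z)$.

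The main obstacle is exactly this combinatorial iteration in the necessity direction, together with the sufficiency construction, which is the genuinely nontrivial content of \cite{hirai2016k}. For that reason I would rely on the citation for both and keep the direct argument only as a consistency check.
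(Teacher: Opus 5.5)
Your proposal is correct and takes the same route as the paper: the paper gives no independent proof and obtains the lemma by specializing \cite[Theorem~1]{hirai2016k} to $k=2$ through the identification $\varphi$ and the correspondence \eqref{eq:two-submodular-half-uj-correspondence}, where the closure condition becomes Boolean majority because three labels from $\{1,2\}$ always repeat a value. Your necessity sanity check is easier than you suggest: with $w:=[(\frac{x+y}{2}+z)/2]_1$ and $w':=[(\frac{x+z}{2}+y)/2]_1$, both finite by your first step, one more application of \eqref{eq:half-uj-convexity} gives $[(w+w')/2]_1=\operatorname{maj}(x,y,z)$, so only the sufficiency direction genuinely needs the citation.
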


\subsubsection{\(\mathrm{D}\)-convex functions and
\(\mathrm{D}\)-convex sets}

To define \(\mathrm{D}\)-convexity on the integer lattice, we first
describe the reflected Freudenthal structure underlying it.
For each \(z\in\mathbb Z^n\), choose a subset
\(T_z\subseteq[n]\).
The reflection of this division by \(T_z\) is obtained by
replacing each vertex \(z+\chi_X\), \(X\subseteq[n]\), by
\(
z+\chi_{(X\setminus T_z)\cup(T_z\setminus X)}.
\)

For these reflected divisions to fit together across unit
hypercubes, the choices of \(T_z\) must satisfy a compatibility
condition.
We say that the family
\((T_z)_{z\in\mathbb Z^n}\) is \emph{compatible} if, whenever two
unit hypercubes have a nonempty common face, the reflected
Freudenthal simplicial divisions induced on that face coincide.
For a compatible family, the reflected divisions of all unit
hypercubes together form a simplicial division of
\(\mathbb R^n\), called a
\emph{discordant Freudenthal simplicial division}.

The corresponding \(\mathrm{D}\)-convexity can be expressed in terms
of two integer midpoint operators associated with a compatible family~\cite[Section~5.4]{fujishige2022discrete}.
\footnote{The notion called
\(\mathrm{D}\)-convexity here is termed
\(\mathrm{BS}^{*}\)-convexity in \cite[Section~5.4]{fujishige2022discrete}.}
Fix a compatible family of reflecting sets
\(\mathcal T=(T_z)_{z\in\mathbb Z^n}\).
For \(x,y\in\mathbb Z^n\), put
\(m:=(x+y)/2\), \(m^-:=\lfloor m\rfloor\), and
\(m^+:=\lceil m\rceil\), and define
\(\mu_1(x,y),\mu_2(x,y)\in\mathbb Z^n\) by
\begin{equation}
\mu_1(x,y)_i
:=
\begin{cases}
m_i^+, & i\in[n]\setminus T_{m^-},\\
m_i^-, & i\in T_{m^-},
\end{cases}
\qquad
\mu_2(x,y)_i
:=
\begin{cases}
m_i^-, & i\in[n]\setminus T_{m^-},\\
m_i^+, & i\in T_{m^-},
\end{cases}
\quad
(i\in[n]).
\label{eq:d-midpoint-operators}
\end{equation}

A nonempty set \(S\subseteq\mathbb Z^n\) is called
\emph{\(\mathrm{D}\)-convex with respect to \(\mathcal T\)} if
\[
x,y\in S
\quad\Longrightarrow\quad
\mu_1(x,y),\ \mu_2(x,y)\in S.
\]
Similarly, a function
\(f\colon\mathbb Z^n\to\mathbb R\cup\{+\infty\}\)
is called \emph{\(\mathrm{D}\)-convex with respect to \(\mathcal T\)} if
\[
f(x)+f(y)
\geq
f\bigl(\mu_1(x,y)\bigr)
+
f\bigl(\mu_2(x,y)\bigr)
\qquad
(x,y\in\mathbb Z^n).
\]
A set or function is called
\emph{\(\mathrm{D}\)-convex on the integer lattice}
if it is \(\mathrm{D}\)-convex with respect to some compatible family
\(\mathcal T\).

We next specialize the above definition to a unit hypercube.
Since compatibility with neighboring unit hypercubes is no longer
involved, the reflected structure is determined by a single subset
\(T\subseteq[n]\).
In this setting, the midpoint conditions defining
\(\mathrm{D}\)-convexity can equivalently be expressed in terms of
reflected meet and join operations induced by a coordinate reflection.
For \(x,y\in\{0,1\}^n\), let
\begin{equation}
(x\wedge y)_i:=\min\{x_i,y_i\},
\qquad
(x\vee y)_i:=\max\{x_i,y_i\}
\qquad (i\in[n]).
\label{eq:componentwise-meet-join}
\end{equation}
Under the identification \(x=\chi_A\), \(y=\chi_B\), where
\(A,B\subseteq[n]\), these operations correspond to intersection and union:
\(
\chi_A\wedge\chi_B=\chi_{A\cap B}
\)
and
\(
\chi_A\vee\chi_B=\chi_{A\cup B}.
\)

For \(T\subseteq[n]\), let
\(\rho_T:\{0,1\}^n\to\{0,1\}^n\) be the coordinate reflection
defined by
\begin{equation}
\rho_T(x)_i
:=
\begin{cases}
1-x_i, & i\in T,\\
x_i, & i\notin T.
\end{cases}
\label{eq:coordinate-reflection-on-unit-hypercube}
\end{equation}
It follows directly from
\eqref{eq:coordinate-reflection-on-unit-hypercube} that
\(\rho_T^2=\operatorname{id}_{\{0,1\}^n}\).
Thus, \(\rho_T\) is an involution and
\(\rho_T^{-1}=\rho_T\).

The componentwise meet and join in
\eqref{eq:componentwise-meet-join} induce the reflected meet and
join associated with \(T\):
\begin{equation}
x\wedge_Ty
:=
\rho_T\bigl(\rho_T(x)\wedge\rho_T(y)\bigr),
\qquad
x\vee_Ty
:=
\rho_T\bigl(\rho_T(x)\vee\rho_T(y)\bigr).
\label{eq:reflected-meet-join}
\end{equation}
By \eqref{eq:reflected-meet-join} and
\(\rho_T^2=\operatorname{id}_{\{0,1\}^n}\), we have
\(
\rho_T(x\wedge_Ty)=\rho_T(x)\wedge\rho_T(y)
\)
and
\(
\rho_T(x\vee_Ty)=\rho_T(x)\vee\rho_T(y).
\)
Hence \((\{0,1\}^n,\wedge_T,\vee_T)\) is a distributive lattice,
and \(\rho_T\) is a lattice isomorphism from
\((\{0,1\}^n,\wedge_T,\vee_T)\) to
\((\{0,1\}^n,\wedge,\vee)\).

Using this distributive lattice representation, the above definition of
\(\mathrm{D}\)-convexity specializes as follows on a unit hypercube.
For \(T\subseteq[n]\), a nonempty set
\(X\subseteq\{0,1\}^n\) is \(\mathrm{D}\)-convex with respect to \(T\)
if and only if
\begin{equation}
x,y\in X
\quad\Longrightarrow\quad
x\wedge_Ty,\ x\vee_Ty\in X.
\label{eq:d-convex-set-unit-hypercube}
\end{equation}
Similarly, a function
\(f\colon\{0,1\}^n\to\mathbb R\cup\{+\infty\}\)
is \(\mathrm{D}\)-convex with respect to \(T\) if and only if
\begin{equation}
f(x)+f(y)
\geq
f(x\wedge_Ty)+f(x\vee_Ty)
\qquad
(x,y\in\{0,1\}^n).
\label{eq:d-convex-function-unit-hypercube}
\end{equation}

We next recall its relation to the
\(\frac12\mathrm{UJ}\) simplicial division.
The following refinement result is due to
Fujishige~\cite[Theorem~9]{fujishige2014bisubmodular}.

\begin{proposition}[
Refinement by the \(\frac12\mathrm{UJ}\) simplicial division{~\cite[Theorem~9]{fujishige2014bisubmodular}}]
\label{prop:discordant-freudenthal-refinement}
The \(\frac12\mathrm{UJ}\) simplicial division of \(\mathbb R^n\) refines every discordant Freudenthal simplicial division of \(\mathbb R^n\).
\end{proposition}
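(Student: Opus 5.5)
By translation invariance of the \(\frac12\mathrm{UJ}\) simplicial division under integer vectors, and since a discordant Freudenthal simplicial division is assembled cube by cube from reflected Freudenthal divisions, I would reduce the statement to a single unit hypercube. Concretely, refinement means that every simplex of the \(\frac12\mathrm{UJ}\) division is contained in some simplex of the discordant Freudenthal division. Every \(\frac12\mathrm{UJ}\)-simplex lies in a single cube \(z+[0,1]^n\). So it suffices to fix \(z\) and \(T=T_z\), and to show that every \(n\)-dimensional \(\frac12\mathrm{UJ}\)-simplex in \(z+[0,1]^n\) is contained in an \(n\)-dimensional simplex of the Freudenthal division of that cube reflected by \(T\). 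Lower-dimensional \(\frac12\mathrm{UJ}\)-simplices are faces of these, hence are also contained in a reflected Freudenthal simplex. Compatibility of \(\mathcal T\) is not needed for the containment itself; it is needed only to know that the target is a simplicial division of \(\mathbb R^n\). After translating, take \(z=\mathbf 0\).

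Next I would remove the reflection. The affine map \(r_T\colon u\mapsto u'\) with \(u'_i=1-u_i\) for \(i\in T\) and \(u'_i=u_i\) otherwise fixes \(c=\frac12\mathbf 1\). It sends the chain \eqref{eq:half-uj-simplex-signed-chain} with signs \((s_i)\) to the chain with the same \(\pi\) and with \(s_i\) replaced by \(-s_i\) for \(i\in T\). Hence \(r_T\) permutes the \(n\)-dimensional \(\frac12\mathrm{UJ}\)-simplices of \([0,1]^n\). On the vertices it acts as \(\rho_T\), so by construction it carries the Freudenthal division of \([0,1]^n\) onto the division reflected by \(T\). It therefore suffices to treat \(T=\emptyset\), that is, to show that every \(\frac12\mathrm{UJ}\)-simplex lies in some \(\Delta_{\mathbf 0,\sigma}\).

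For the unreflected case I would use the standard description
\[
\Delta_{\mathbf 0,\sigma}=\{x\in\mathbb R^n \mid 1\ge x_{\sigma(1)}\ge x_{\sigma(2)}\ge\cdots\ge x_{\sigma(n)}\ge 0\}.
\]
A point of the \(\frac12\mathrm{UJ}\)-simplex has the form \(u=c+\frac12\sum_{k=1}^n t_k s_{\pi(k)}\) with \(1\ge t_1\ge\cdots\ge t_n\ge0\), so that \(u_{\pi(k)}=\frac12\pm\frac12 t_k\), where the sign is that of \(s_{\pi(k)}\).

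Let \(P\) be the set of indices \(k\) with \(s_{\pi(k)}=+e_{\pi(k)}\), and let \(M\) be the remaining indices. On \(P\) the coordinates \(u_{\pi(k)}\) lie in \([\frac12,1]\) and are nonincreasing in \(k\). On \(M\) they lie in \([0,\frac12]\) and are nondecreasing in \(k\). I would therefore define \(\sigma\) to list \(\pi(k)\) for \(k\in P\) in increasing order of \(k\), followed by \(\pi(k)\) for \(k\in M\) in decreasing order of \(k\). Then every \(u\) in the simplex satisfies the chain of inequalities above, so the whole simplex lies in \(\Delta_{\mathbf 0,\sigma}\).

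The main point requiring care is the reflection step. One must check that \(r_T\) really maps \(\frac12\mathrm{UJ}\)-simplices to \(\frac12\mathrm{UJ}\)-simplices, and that it carries each \(\Delta_{\mathbf 0,\sigma}\) to a simplex of the reflected division as defined via the vertex replacement \(\chi_X\mapsto\chi_{X\triangle T}\). Both facts follow from \(r_T\) being affine and acting as \(\rho_T\) on \(\{0,1\}^n\). The remaining step, the choice of \(\sigma\), is a direct monotonicity check.
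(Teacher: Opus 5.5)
Your proof is correct and follows essentially the same local route the paper indicates right after citing Fujishige's Theorem~9: work on one unit hypercube and use the signed-chain description to place each \(\frac12\mathrm{UJ}\)-simplex inside a reflected Freudenthal simplex. The paper states this containment only as a remark, and your reduction to \(T=\emptyset\) via the affine reflection, together with the explicit ordering \(\sigma\) (indices with \(s_{\pi(k)}=+e_{\pi(k)}\) in chain order, then the remaining ones in reverse order), supplies the verification it leaves implicit.
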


Locally on each unit hypercube, this refinement relation can be seen directly from the signed-chain description in \eqref{eq:half-uj-simplex-signed-chain}.
For any choice of \(s_i\in\{e_i,-e_i\}\) \((i\in[n])\)
and a permutation \(\pi\) of \([n]\), the corresponding
\(\frac12\mathrm{UJ}\)-simplex is contained in an
\(n\)-dimensional simplex of the reflected Freudenthal division on that unit hypercube.

\section{A Necessary Condition for \(\mathrm{L}\)-Extendability}
\label{sec:l-extendability-integral-convexity}

We prove that every $\mathrm{L}$-extendable function
on the integer lattice is integrally convex.
This necessary condition holds for arbitrary
$\mathrm{L}$-convex relaxations, whose values at nonintegral
half-integral points are subject only to
$\frac12\mathrm{UJ}$-convexity.

We use the following characterization of integral convexity due to Moriguchi, Murota, Tamura, and Tardella.

\begin{lemma}[{\cite[Theorem~A.1]{moriguchi2020discrete}}]
\label{lem:integral-convexity-global-weak-dmc}
Let \(f:\mathbb Z^n\to\mathbb R\cup\{+\infty\}\), and let
\(\tilde f\) denote its local convex extension.  Then the following
conditions are equivalent:
\begin{enumerate}
\item[\textup{(i)}]
\(f\) is integrally convex.
\item[\textup{(ii)}]
For every \(x,y\in\operatorname{dom} f\) with
\(\lVert x-y\rVert_\infty\ge 2\),
\begin{equation}
f(x)+f(y)
\ge
2\tilde f\left(\frac{x+y}{2}\right).
\label{eq:global-weak-discrete-midpoint-inequality}
\end{equation}
\end{enumerate}
\end{lemma}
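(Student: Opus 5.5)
The direction (i)\(\Rightarrow\)(ii) is immediate, and for (ii)\(\Rightarrow\)(i) I would show that \(\tilde f\) coincides with the convex envelope of \(f\), which is convex. For (i)\(\Rightarrow\)(ii): if \(\tilde f\) is convex on \(\mathbb R^n\), then
\[
\tilde f\left(\frac{x+y}{2}\right)\le \frac12\tilde f(x)+\frac12\tilde f(y)=\frac12 f(x)+\frac12 f(y),
\]
because \(\tilde f=f\) on \(\mathbb Z^n\). This holds for all \(x,y\), in particular when \(\lVert x-y\rVert_\infty\ge2\).

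For (ii)\(\Rightarrow\)(i), define
\[
\check f(x):=\inf\Bigl\{\sum_y\lambda_y f(y)\Bigm|\ \text{finite convex combinations with } \sum_y\lambda_y y=x,\ y\in\mathbb Z^n\Bigr\}.
\]
This function is convex, and \(\check f\le\tilde f\) trivially. It therefore suffices to prove \(\tilde f(x)\le\sum_y\lambda_y f(y)\) for every finite convex combination with barycenter \(x\) and support in \(\operatorname{dom} f\). Fix such a combination, and let \(B\) be the smallest integer box containing its support. I would work in the polytope of all convex combinations \((\lambda_y)_{y\in B\cap\mathbb Z^n}\) with barycenter \(x\) and value at most the given one. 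This set is nonempty and compact, since \(B\cap\mathbb Z^n\) is finite. Choose an element minimizing the second moment \(\Phi(\lambda):=\sum_y\lambda_y\lVert y\rVert_2^2\).

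The key step is a local improvement. Suppose two support points \(y,z\) of the minimizer satisfy \(\lVert y-z\rVert_\infty\ge2\), and put \(m=(y+z)/2\). By (ii), \(f(y)+f(z)\ge 2\tilde f(m)=2\sum_{w\in N(m)}\mu_w f(w)\) with \(\sum\mu_w w=m\). Let \(\varepsilon=\min(\lambda_y,\lambda_z)\). Remove mass \(\varepsilon\) from each of \(y\) and \(z\), and add mass \(2\varepsilon\mu_w\) to each \(w\in N(m)\). This preserves the barycenter and does not increase the value. Every \(w\in N(m)\) lies in \(B\), so the new combination stays in the admissible set.

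I then need to show that \(\Phi\) strictly decreases. Since both sides have the same mean \(m\) on the pair, the change in \(\Phi\) equals \(2\varepsilon\bigl(\sum_w\mu_w\lVert w-m\rVert^2-\lVert (y-z)/2\rVert^2\bigr)\). Compare the two terms coordinatewise. If \(y_i-z_i\) is even, then \(m_i\) is an integer and \(w_i=m_i\), so this coordinate contributes nothing to the first term. If \(y_i-z_i\) is odd, the coordinate contributes at most \(1/4\) to the first term and at least \(1/4\) to the second. Finally, some coordinate has \(|y_i-z_i|\ge2\), so it contributes \(\ge1\) to the second term but at most \(1/4\) to the first. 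Hence \(\Phi\) strictly decreases, contradicting minimality.

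Consequently, in the minimizer every pair of support points satisfies \(\lVert y-z\rVert_\infty\le1\). Then for each \(i\) the values \(y_i\) over the support take at most two consecutive integers, so the support lies in some \(a+\{0,1\}^n\). Since \(x\) is a convex combination of these points, \(\lfloor x\rfloor\le y\le\lceil x\rceil\) for every support point \(y\), that is, the support is contained in \(N(x)\). By the definition \eqref{eq:local-convex-extension}, \(\tilde f(x)\) is at most the value of this minimizer, which is at most the original value. Thus \(\tilde f=\check f\) is convex. Points with \(\check f(x)=+\infty\) need no argument, since then \(\tilde f(x)=+\infty\) as well.

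I expect the main care to lie in the strict decrease of the potential \(\Phi\), specifically in handling the coordinates where \(m_i\) is fractional. I also need existence of the minimizer, which is why I restrict to the finite box \(B\): the replacement step never leaves it, so a minimizer exists.
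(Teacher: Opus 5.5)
The paper gives no proof of this lemma. It is quoted from Moriguchi, Murota, Tamura, and Tardella and used only as a black box in the proof of Theorem~\ref{thm:l-extendability-implies-integral-convexity}. So there is no internal argument to compare yours with; what matters is whether your self-contained proof stands on its own, and it does.

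The direction (i)$\Rightarrow$(ii) is midpoint convexity of $\tilde f$ together with $\tilde f=f$ on $\mathbb Z^n$. For (ii)$\Rightarrow$(i), each step checks out:
\begin{itemize}
\item The admissible set is a genuine polytope. Requiring the value to be at most a finite $V$ forces zero weight outside $\dom f$, and the constraint is then linear. Hence a minimizer of $\Phi$ exists.
\item The minimum defining $\tilde f(m)$ is attained. Because $f(y)+f(z)<+\infty$, its weights $\mu$ are supported in $\dom f$.
\item $N(m)\subseteq B$ holds because $B$ has integer corners.
\item The identities $\frac12(\lVert y\rVert^2+\lVert z\rVert^2)=\lVert m\rVert^2+\lVert (y-z)/2\rVert^2$ and $\sum_w\mu_w\lVert w\rVert^2=\lVert m\rVert^2+\sum_w\mu_w\lVert w-m\rVert^2$ give exactly your formula for the change in $\Phi$. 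Your parity analysis settles its sign: the coordinate with $|y_i-z_i|\ge2$ gives strict inequality whether $y_i-z_i$ is even (contributions $0$ versus at least $1$) or odd (contributions $\frac14$ versus at least $\frac94$).
\item The endgame is right: pairwise $\ell_\infty$-distance at most $1$ puts the support in a translate of $\{0,1\}^n$, hence in $N(x)$.
\end{itemize}

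Compared with simply citing the result, your route has three features worth noting.
\begin{itemize}
\item It handles unbounded $\dom f$ with no extra work, by confining everything to the finite box $B$, where compactness gives a minimizer.
\item It never needs the convex envelope to be attained. You bound $\tilde f(x)$ by the value of every finite convex combination and then pass to the infimum.
\item It identifies $\tilde f$ directly with the convex envelope $\check f$, the standard equivalent form of integral convexity.
\end{itemize}
The citation, in turn, keeps the paper short at the cost of not being self-contained.
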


Using this lemma, we prove the following.

\begin{theorem}[\(\mathrm{L}\)-extendability implies integral convexity]
\label{thm:l-extendability-implies-integral-convexity}
Let \(f:\mathbb Z^n\to\mathbb R\cup\{+\infty\}\).
If \(f\) is \(\mathrm{L}\)-extendable, then \(f\) is integrally convex.
\end{theorem}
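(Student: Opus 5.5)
We will verify condition (ii) of Lemma~\ref{lem:integral-convexity-global-weak-dmc}. Let \(g\) be a \(\frac12\mathrm{UJ}\)-convex relaxation of \(f\), and let \(\hat g\) be its \(\frac12\mathrm{UJ}\) simplicial extension. By Proposition~\ref{prop:half-uj-convex-extension-characterization}, \(\hat g\) is convex on \(\mathbb R^n\). Fix \(x,y\in\operatorname{dom} f\) and put \(m:=(x+y)/2\in(\frac12\mathbb Z)^n\). Convexity of \(\hat g\), together with \(\hat g=g=f\) on \(\mathbb Z^n\) (integer points are vertices of the division), gives
\[
f(x)+f(y)\ \ge\ 2\hat g(m)=2g(m).
\]
It therefore suffices to prove the pointwise bound
\[
g(m)\ \ge\ \tilde f(m)\qquad\text{for every } m\in\left(\tfrac12\mathbb Z\right)^n,
\]
that is, the value of \(g\) at a half-integral point dominates some convex combination of values of \(f\) on \(N(m)\) representing \(m\). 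This inequality holds for any \(\frac12\mathrm{UJ}\)-convex \(g\) agreeing with \(f\) on \(\mathbb Z^n\), and it is the key step. Note that we do not need \(\lVert x-y\rVert_\infty\ge2\).

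To prove the key bound, we translate so that \(m\in\{0,\frac12,1\}^n\). Let \(J\) be the set of coordinates with \(m_j=\frac12\). Then \(N(m)\) is the face \(F\) of the cube on which the coordinates outside \(J\) are fixed. If \(g(m)=+\infty\) there is nothing to show. Otherwise we use the discrete midpoint inequality \eqref{eq:half-uj-convexity} repeatedly to push mass from \(m\) toward integer points.

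Pick \(j\in J\) and set \(a:=m+\frac12 e_j\) and \(b:=m-\frac12 e_j\). Then \((a+b)/2=m\), and both roundings return \(m\), so this choice gives only \(g(a)+g(b)\ge 2g(m)\), which points the wrong way. The correct move is instead to apply convexity of \(\hat g\) along the segment through \(m\) on the face \(F\) in the opposite direction. Since \(\hat g\) is convex and affine on the \(\frac12\mathrm{UJ}\)-simplices, the value \(g(m)\) at the centre of \(F\) should be bounded below by the lower convex envelope of \(f|_F\) evaluated at \(m\).

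The cleanest route is by induction on \(|J|\), taking \(\tilde f\) as the convex envelope on \(F\). Equivalently, it suffices to show that \(\hat g|_F\ge \operatorname{conv}(f|_{F\cap\mathbb Z^n})\). This is not automatic, because \(\hat g\) could dip below the envelope at \(m\). To rule this out, we use that \(\hat g\) is convex on \(F\) and agrees with \(f\) at the vertices of \(F\). Take a \(\frac12\mathrm{UJ}\)-simplex \(S\) containing \(m\) together with the chain \(c,\;c+\frac12 s_{\pi(1)},\ldots\) ending at the \(0\)-\(1\) vertex. Convexity along the ray from that vertex through \(m\) gives a bound that relates \(g(m)\) to values at other half-integral points.

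The main obstacle is exactly this direction of the inequality. Convexity of \(\hat g\) only yields upper bounds of the form \(g(\text{midpoint})\le\) an average of endpoint values, whereas we need a lower bound on \(g(m)\) by \(\tilde f(m)\). I would therefore try a different decomposition of the original target. Rather than routing through \(g(m)\), write \(m\) inside its \(\frac12\mathrm{UJ}\)-simplex and expand the chain. Then use \eqref{eq:half-uj-convexity} on the pair \((x,y)\) directly and iterate: \(f(x)+f(y)\ge g(p)+g(q)\), where \(p=[m]_{\frac12}\) and \(q=[m]_1\). Next, apply \eqref{eq:half-uj-convexity} to pairs formed from \(x,y\) and these points, halving distances as in the proof of discrete midpoint convexity for \(\mathrm{L}^\natural\) functions. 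The aim is to end with a convex combination of integer points of \(N(m)\), with total weight 2 and barycentre \(m\), bounded above by \(f(x)+f(y)\); this gives \(\ge 2\tilde f(m)\).

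The bookkeeping that keeps all the intermediate points inside a shrinking box around \(m\) and makes the process terminate is where I expect the real work. A potential-function argument on \(\lVert\cdot-m\rVert_\infty\) is the first thing I would try.
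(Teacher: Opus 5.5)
\textbf{There is a genuine gap: the central reduction is false.} Your ``key bound'' \(g(m)\ge\tilde f(m)\) for half-integral \(m\) fails for general \(\frac12\mathrm{UJ}\)-convex relaxations. Take \(n=1\) and \(f=\delta_{\{0,1\}}\). Fix \(M>0\) and set \(g(0)=g(1)=0\), \(g(\frac12)=-M\), and \(g=+\infty\) elsewhere on \(\frac12\mathbb Z\). Only three instances of \eqref{eq:half-uj-convexity} are nontrivial:
\begin{itemize}
\item \(x=0\), \(y=1\) gives \(0\ge-2M\);
\item \(x=0\), \(y=\frac12\) holds with equality, because \([\frac14]_{\frac12}=\frac12\) and \([\frac14]_1=0\);
\item \(x=\frac12\), \(y=1\) holds with equality, because \([\frac34]_{\frac12}=\frac12\) and \([\frac34]_1=1\).
\end{itemize}
So \(g\) is a relaxation of \(f\), yet \(g(\frac12)=-M<0=\tilde f(\frac12)\), and \(M\) is arbitrary. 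Routing the argument through \(f(x)+f(y)\ge 2g(m)\) therefore cannot work; this is exactly why general relaxations need not be exact. Any correct argument must use midpoint inequalities in which the uncontrolled value \(g(m)\) appears on both sides and cancels.

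Your fallback plan (iterate \eqref{eq:half-uj-convexity} and control a potential) points in that direction, but it is not yet an argument. Its first step is vacuous, since \([m]_{\frac12}=[m]_1=m\) when \(x,y\in\mathbb Z^n\). You also never say which pairs to combine, or why the values of \(g\) at nonintegral points would cancel.

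\textbf{The missing idea: no iteration is needed.} Two levels of midpoints with a cross-pairing suffice. Put \(a:=(x+m)/2\) and \(b:=(m+y)/2\), with roundings \(a^-:=[a]_{\frac12}\), \(a^+:=[a]_1\), \(b^-:=[b]_{\frac12}\), \(b^+:=[b]_1\). Pair the integral rounding of one quarter point with the nonintegral rounding of the other:
\[
r:=\frac{a^++b^-}{2},\qquad s:=\frac{a^-+b^+}{2}.
\]
A coordinatewise check gives:
\begin{itemize}
\item \([r]_{\frac12}=[s]_{\frac12}=m\);
\item \([r]_1,[s]_1\in N(m)\cap\mathbb Z^n\);
\item \([r]_1+[s]_1=2m\).
\end{itemize}
Now apply \eqref{eq:half-uj-convexity} four times:
\[
\begin{aligned}
g(x)+g(m)&\ge g(a^-)+g(a^+), & g(m)+g(y)&\ge g(b^-)+g(b^+),\\
g(a^+)+g(b^-)&\ge g(m)+g([r]_1), & g(a^-)+g(b^+)&\ge g(m)+g([s]_1).
\end{aligned}
\]
Here \(g(m)\) is finite by \(g(x)+g(y)\ge 2g(m)\), so all terms on the right are finite. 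Adding the four inequalities and cancelling yields
\[
f(x)+f(y)\ge f([r]_1)+f([s]_1)\ge 2\tilde f(m).
\]
This is exactly your stated aim: integer points of \(N(m)\) with total weight \(2\) and barycentre \(m\). It is achieved with four applications of \eqref{eq:half-uj-convexity}, with no shrinking-box bookkeeping or termination argument.
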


\begin{pf}
Let \(\tilde f\) denote the local convex extension of \(f\).
Since \(f\) is \(\mathrm{L}\)-extendable, there exists a
\(\frac12\mathrm{UJ}\)-convex extension
\(g:(\frac12\mathbb Z)^n\to\mathbb R\cup\{+\infty\}\)
of \(f\).
Thus, \(g(x)=f(x)\) for every \(x\in\mathbb Z^n\).

By Lemma~\ref{lem:integral-convexity-global-weak-dmc}, it suffices to
prove \eqref{eq:global-weak-discrete-midpoint-inequality} for all
\(x,y\in\dom f\) with \(\lVert x-y\rVert_\infty\geq2\).
In fact, we prove it for all \(x,y\in\mathbb Z^n\).
Take \(x,y\in\mathbb Z^n\), and put \(z:=(x+y)/2\).
If \(f(x)=+\infty\) or \(f(y)=+\infty\), the inequality is immediate.
Thus we may assume that \(f(x)\) and \(f(y)\) are finite.  Since
\(g(x)=f(x)\) and \(g(y)=f(y)\), the values \(g(x)\) and \(g(y)\) are
finite.  Applying the \(\frac12\mathrm{UJ}\)-convexity inequality to
\(x\) and \(y\), and noting that
\[
\left[ z\right]_{\frac12}
=
\left[ z\right]_{1}
=
z,
\]
we obtain \(g(x)+g(y)\ge 2g(z)\).  In particular,
\(g(z)<+\infty\).

Put
\(
a:=(x+z)/{2},
~b:=(z+y)/{2}
\),
and define
\[
a^-:=\left[ a\right]_{\frac12},\quad
a^+:=\left[a\right]_{1},\quad
b^-:=\left[ b\right]_{\frac12},\quad
b^+:=\left[b\right]_{1},\qquad
r:=\frac{a^++b^-}{2},\quad
s:=\frac{a^-+b^+}{2}.
\]

We use the following rounding relations:
\[
\textup{(i)}\quad
\left[ r\right]_{\frac12}
=
\left[ s\right]_{\frac12}
=
z;
\qquad
\textup{(ii)}\quad
\left[ r\right]_{1},
\left[ s\right]_{1}
\in\mathbb Z^n\cap N(z);
\qquad
\textup{(iii)}\quad
\left[ r\right]_{1}
+
\left[ s\right]_{1}
=
2z.
\]
The verification of these relations is given in
Appendix~\ref{app:rounding-relations-l-extendability};
see Lemma~\ref{lem:rounding-relations-for-l-extendability}.

We now apply the \(\frac12\mathrm{UJ}\)-convexity inequality four times.
Applying it to \(x,z\) and to \(z,y\), respectively, gives
\begin{align}
g(x)+g(z)
&\ge
g(a^-)+g(a^+),
\label{eq:midpoint-xz-general-relaxation}\\
g(z)+g(y)
&\ge
g(b^-)+g(b^+).
\label{eq:midpoint-zy-general-relaxation}
\end{align}
Since the left-hand sides are finite, all four values on the right-hand
sides are finite.

Applying the same inequality to \(a^+,b^-\), whose midpoint is \(r\),
and using~\textup{(i)}, we
obtain
\begin{equation}
g(a^+)+g(b^-)
\ge
g(z)+
g\left(
\left[ r\right]_{1}
\right).
\label{eq:midpoint-cross-r-general-relaxation}
\end{equation}
Similarly, applying it to \(a^-,b^+\), whose midpoint is \(s\), gives
\begin{equation}
g(a^-)+g(b^+)
\ge
g(z)+
g\left(
\left[ s\right]_{1}
\right).
\label{eq:midpoint-cross-s-general-relaxation}
\end{equation}

All terms in
\eqref{eq:midpoint-xz-general-relaxation}--%
\eqref{eq:midpoint-cross-s-general-relaxation}
are finite.  Adding these four inequalities and cancelling the common
finite terms, we obtain
\begin{equation}
g(x)+g(y)
\ge
g\left(
\left[ r\right]_{1}
\right)
+
g\left(
\left[ s\right]_{1}
\right).
\label{eq:gx-gy-geq-ceil-rs}
\end{equation}

By \textup{(ii)}, both
\([ r]_{1}\) and
\([s]_{1}\) belong to
\(\mathbb Z^n\cap N(z)\); hence \(g\) agrees with \(f\) at both points.
Moreover, by \textup{(iii)},
\[
z
=
\frac12
\left[ r\right]_{1}
+
\frac12
\left[ s\right]_{1}.
\]
Therefore, by the definition of the local convex extension,
\[
\begin{aligned}
\tilde f(z)
&\le
\frac12
f\left(
\left[ r\right]_{1}
\right)
+
\frac12
f\left(
\left[ s\right]_{1}
\right)\\
&=
\frac12
g\left(
\left[ r\right]_{1}
\right)
+
\frac12
g\left(
\left[ s\right]_{1}
\right)\\
&\le
\frac{g(x)+g(y)}{2}
=
\frac{f(x)+f(y)}{2},
\end{aligned}
\]
where the last inequality follows from \eqref{eq:gx-gy-geq-ceil-rs}.
Since \(z=(x+y)/2\), this proves
\[
f(x)+f(y)
\ge
2\tilde f\left(\frac{x+y}{2}\right)
\qquad
(x,y\in\mathbb Z^n).
\]
Thus \(f\) satisfies global weak discrete midpoint convexity and is
therefore integrally convex.\qed
\end{pf}

Theorem~\ref{thm:l-extendability-implies-integral-convexity} establishes
the inclusion
\[
\{\text{\(\mathrm{L}\)-extendable functions}\}
\subseteq
\{\text{integrally convex functions}\}.
\]
This inclusion is strict, as will be shown in
Example~\ref{ex:relative-opposite-not-majority} in Section~\ref{subsec:set-theoretic-unit-hypercube}.

A complete characterization of the integrally convex functions that are \(\mathrm{L}\)-extendable remains open.

\section{Characterizations of \(\mathrm{L}\)-Extendability through Local Convex Extensions}
\label{sec:local-structures}

We characterize functions on the integer lattice
that are $\mathrm{L}$-extendable through their local convex
extensions in terms of integral convexity and
$\mathrm{D}$-convexity of their unit hypercube restrictions.
Let \(f:\mathbb Z^n\to\mathbb R\cup\{+\infty\}\), and let \(\tilde f:\mathbb R^n\to\mathbb R\cup\{+\infty\}\) be its local convex extension.
We say that \(f\) is \emph{\(\mathrm{L}\)-extendable (or \(\frac12\mathrm{UJ}\)-extendable) through its local convex extension} if the restriction of \(\tilde f\) to \((\frac12\mathbb Z)^n\) is \(\frac12\mathrm{UJ}\)-convex.

\subsection{Subsets of the vertices of a unit hypercube}
\label{subsec:set-theoretic-unit-hypercube}

As a first step toward characterizing $\mathrm{L}$-extendability
through local convex extensions, we consider indicator functions
of subsets of the vertices of a unit hypercube.
For a nonempty subset \(X\subseteq\{0,1\}^n\), we show that its
indicator function is \(\mathrm{L}\)-extendable through its local convex extension if and only if \(X\) is \(\mathrm{D}\)-convex.
We further show that this condition is equivalent to \(X\) being majority-closed and containing a relative opposite pair.
Thus, as a byproduct, we obtain a combinatorial characterization of the known discrete convexity class of \(\mathrm{D}\)-convex subsets of the vertices of a unit hypercube.

We say that $\overline X$ is
\emph{$\frac12\mathrm{UJ}$-triangulable}
if it admits a simplicial division consisting entirely of
$\frac12\mathrm{UJ}$-simplices.
The following lemma relates this property to
$\mathrm{L}$-extendability through the local convex extension
and is a consequence of results of
Iwamasa~\cite{iwamasa2026towards} on UJ-convex sets
at the integer scale.
We explain the correspondence with the half-integral
setting used here in the proof.

\begin{lemma}
\label{lem:halfUJ-triangulable-iff-convex}
Let \(X\subseteq\{0,1\}^n\) be nonempty, and let
\(\delta_X:\mathbb Z^n\to\mathbb R\cup\{+\infty\}\)
denote its indicator function.
Then the following conditions are equivalent:
\begin{enumerate}
\item[\textup{(i)}]
The function \(\delta_X\) is \(\mathrm{L}\)-extendable through its
local convex extension.
\item[\textup{(ii)}]
\(\overline X\) is \(\frac12\mathrm{UJ}\)-triangulable.
\end{enumerate}
\end{lemma}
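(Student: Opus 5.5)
We first identify the local convex extension of \(\delta_X\) explicitly. Since \(X\subseteq\{0,1\}^n\), for \(x\in[0,1]^n\) we have \(N(x)\cap X\subseteq X\). Moreover, if \(x\in\overline X\), then every representation of \(x\) as a convex combination of points of \(X\) uses only points \(y\) with \(y_i=x_i\) whenever \(x_i\in\{0,1\}\); such points lie in \(N(x)\). Hence \(\tilde\delta_X=\delta_{\overline X}\) on \(\mathbb R^n\), taking the value \(0\) on \(\overline X\) and \(+\infty\) elsewhere. Its restriction \(g\) to \((\frac12\mathbb Z)^n\) is therefore the indicator function of \(\overline X\cap\{0,\frac12,1\}^n\). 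By Proposition~\ref{prop:half-uj-convex-extension-characterization}, condition (i) holds if and only if the \(\frac12\mathrm{UJ}\) simplicial extension \(\hat g\) is convex.

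Next we compute \(\hat g\). It is the indicator function of the union \(U\) of all \(\frac12\mathrm{UJ}\)-simplices whose vertices all lie in \(\overline X\); the value is \(0\) there because an affine interpolation of zeros is zero. So (i) is equivalent to \(U\) being convex. We always have \(U\subseteq\overline X\), since each such simplex is the convex hull of points of \(\overline X\).

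For (ii)\(\Rightarrow\)(i), suppose \(\overline X\) is a union of \(\frac12\mathrm{UJ}\)-simplices forming a simplicial division. Each of these simplices has all its vertices in \(\overline X\), so \(\overline X\subseteq U\). Hence \(U=\overline X\), which is convex.

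For (i)\(\Rightarrow\)(ii), assume \(U\) is convex. Every point of \(X\) is a vertex of the division, so \(X\subseteq U\), and convexity gives \(\overline X\subseteq U\). Combined with \(U\subseteq\overline X\), this yields \(U=\overline X\). The \(\frac12\mathrm{UJ}\)-simplices contained in \(\overline X\), together with their faces, form a subcomplex of the \(\frac12\mathrm{UJ}\) division, and that subcomplex covers \(\overline X\). This gives a simplicial division of \(\overline X\) consisting of \(\frac12\mathrm{UJ}\)-simplices, which is (ii).

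The step most likely to be delicate is the last one. Simplices of different dimensions may appear, so the argument must use the full subcomplex (all simplices of the division lying in \(\overline X\)), not only the maximal ones. It must also check that intersections of its members are faces; this holds automatically because they are simplices of one fixed simplicial division. The other point needing care is the identity \(\tilde\delta_X=\delta_{\overline X}\), that is, the claim that the convex hull of \(X\cap N(x)\) contains every \(x\in\overline X\). This is where the face argument above, based on the coordinates of \(x\) equal to \(0\) or \(1\), is needed. Alternatively, we could cite Iwamasa's integer-scale result and rescale, but the direct argument avoids that dependence.
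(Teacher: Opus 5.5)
Your proof is correct, but it takes a different route from the paper's. The paper observes only that the restriction of $\widetilde{\delta_X}$ to $(\frac12\mathbb Z)^n$ is the indicator function of $\overline X\cap(\frac12\mathbb Z)^n$. It then invokes Iwamasa's characterization of $\mathrm{UJ}$-convex sets ($\mathrm{L}$-convex sets of type $C_n$): these are the sets $S$ with $S=\overline S\cap\mathbb Z^n$ whose convex hull is the union of the simplices of a convex subcomplex of the type-$C_n$ Coxeter complex. The paper transports this characterization to the half-integral scale by rescaling.

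You instead rely on Proposition~\ref{prop:half-uj-convex-extension-characterization}, which is already in the preliminaries. You compute the $\frac12\mathrm{UJ}$ simplicial extension of an indicator function directly as $\delta_U$, where $U$ is the union of the $\frac12\mathrm{UJ}$-simplices whose vertices all lie in $\overline X$. You then use the two inclusions $U\subseteq\overline X$ (always) and $X\subseteq U$, so that convexity of $U$ is equivalent to $U=\overline X$. This in turn says exactly that the subcomplex of $\frac12\mathrm{UJ}$-simplices contained in $\overline X$ covers $\overline X$.

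Your argument buys three things. It is self-contained relative to the paper's preliminaries. It avoids both the scale change and the dependence on an external preprint. It also makes explicit the identity $\widetilde{\delta_X}=\delta_{\overline X}$, i.e.\ the integral convexity of subsets of $\{0,1\}^n$, which the paper uses tacitly when it identifies the effective domain. The paper's route, in exchange, places the lemma inside the general theory of $\mathrm{L}$-convex sets of type $C_n$, where the structural characterization holds for arbitrary sets and not only for convex hulls of $0$-$1$ points.
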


\begin{pf}
Let \(\widetilde{\delta_X}\) denote the local convex extension of
\(\delta_X\).
The effective domain of
\(\widetilde{\delta_X}|_{(\frac12\mathbb Z)^n}\) is
\(
\overline X\cap(\frac12\mathbb Z)^n.
\)
Since \(\widetilde{\delta_X}|_{(\frac12\mathbb Z)^n}\) is the
indicator function of this set, it is
\(\frac12\mathrm{UJ}\)-convex if and only if
\(
\overline X\cap(\frac12\mathbb Z)^n
\)
is \(\frac12\mathrm{UJ}\)-convex.
Hence, by the definition of \(\mathrm{L}\)-extendability through the
local convex extension, condition \textup{(i)} is equivalent to this
set being \(\frac12\mathrm{UJ}\)-convex.

\(\mathrm{UJ}\)-convex sets on the integer lattice are $\mathrm{L}$-convex sets of type $C_n$ in Iwamasa's terminology~\cite[Section~8.5 and Corollary~B.5]{iwamasa2026towards}; see also~\cite{hirai2018convexity}.
Moreover, Iwamasa provides the following characterization of
\(\mathrm{UJ}\)-convex sets: a nonempty set
\(X\subseteq\mathbb Z^n\) is \(\mathrm{UJ}\)-convex if and only if
\(X=\overline X\cap\mathbb Z^n\) and \(\overline X\) is the union of the simplices of a nonempty convex subcomplex of the Euclidean Coxeter
complex of type \(C_n\)
\cite[Lemma~5.5 and Sections~6.1 and~8.5]{iwamasa2026towards}.

We now pass from the integer scale used by Iwamasa~\cite{iwamasa2026towards} to the
half-integral scale used in this paper.
Since
\(
X
\subseteq
\overline X\cap(\frac12\mathbb Z)^n
\subseteq
\overline X,
\)
the convex hull of
\(
\overline X\cap(\frac12\mathbb Z)^n
\)
is \(\overline X\).
Under the half-integral rescaling used here, UJ-convexity becomes
\(\frac12\mathrm{UJ}\)-convexity, and the simplicial complex
underlying the UJ structure becomes the
\(\frac12\mathrm{UJ}\) simplicial division.
Consequently, the results cited above say precisely that
\(
\overline X\cap(\frac12\mathbb Z)^n
\)
is \(\frac12\mathrm{UJ}\)-convex if and only if
\(\overline X\) is the union of the simplices of a subcomplex of
the \(\frac12\mathrm{UJ}\) simplicial division; this is exactly the condition
that \(\overline X\) be \(\frac12\mathrm{UJ}\)-triangulable.
\qed
\end{pf}

We now introduce the notion of a relative opposite pair.

\begin{definition}[Relative opposite pairs]
\label{def:relative-opposite-pair}
Let \(X\subseteq\{0,1\}^n\) be nonempty.
Define
\begin{equation}
I_X
:=
\{i\in[n]\mid \text{there exist } x,y\in X \text{ such that } x_i\neq y_i\}.
\label{eq:relative-opposite-free-coordinates}
\end{equation}
For each \(i\in[n]\setminus I_X\), all points of \(X\) have the same \(i\)-th coordinate; denote this common value by \(\alpha_i\in\{0,1\}\).
Then the smallest face of \([0,1]^n\) containing \(X\) is
\begin{equation}
F_X
=
\{u\in[0,1]^n\mid u_i=\alpha_i \ (i\in[n]\setminus I_X)\}.
\label{eq:smallest-face-containing-x}
\end{equation}

For \(d\in X\), define its \emph{relative opposite point} \(d^{\operatorname{opp}}\) in \(F_X\) by
\begin{equation}
d^{\operatorname{opp}}_i
:=
\begin{cases}
1-d_i, & i\in I_X,\\
d_i, & i\notin I_X.
\end{cases}
\label{eq:relative-opposite-point}
\end{equation}
If \(d^{\operatorname{opp}}\in X\), then \(d\) and
\(d^{\operatorname{opp}}\) are called a
\emph{relative opposite pair} of \(X\).
If \(I_X=\emptyset\), or equivalently if \(X\) is a singleton, then
\(d^{\operatorname{opp}}=d\) for its unique point \(d\).
Thus every singleton subset of \(\{0,1\}^n\) contains a relative opposite pair.
\end{definition}

The following lemma shows that
\(\frac12\mathrm{UJ}\)-triangulability implies the existence of a relative opposite pair.

\begin{lemma}
\label{lem:halfUJ_triangulable_implies_relative_opposite_pair}
Let \(X\subseteq\{0,1\}^n\) be nonempty.
If \(\overline X\) is \(\frac12\mathrm{UJ}\)-triangulable, then \(X\) contains a relative opposite pair.
\end{lemma}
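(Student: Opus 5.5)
Our approach is to reduce to the full-dimensional case and then use the fact that every $n$-dimensional $\frac12\mathrm{UJ}$-simplex meets the center $c$.

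\textbf{Reduction.} First we reduce to the case $I_X=[n]$. The face $F_X$ is itself a unit hypercube of dimension $k:=|I_X|$. The restriction of the $\frac12\mathrm{UJ}$ simplicial division to a face of $[0,1]^n$ is the $\frac12\mathrm{UJ}$ simplicial division of that face. To see this, note that the faces of the signed-chain simplices \eqref{eq:half-uj-simplex-signed-chain} lying in $F_X$ are exactly the signed-chain simplices for the lower-dimensional cube: take the chain whose first $n-k$ steps fix the coordinates outside $I_X$ to $\alpha_i$, and then drop the vertices before the face is reached. Hence a $\frac12\mathrm{UJ}$-triangulation of $\overline X\subseteq F_X$ is a $\frac12\mathrm{UJ}$-triangulation in the $k$-dimensional sense. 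Relative opposite pairs of $X$ correspond to ordinary antipodal pairs in $\{0,1\}^{I_X}$. The singleton case ($k=0$) is trivial by Definition~\ref{def:relative-opposite-pair}. So we may assume $I_X=[n]$ and $n\ge1$.

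\textbf{Full-dimensional case.} Since $I_X=[n]$, for each $i$ the set $X$ has points with $x_i=0$ and points with $x_i=1$. This alone does not make $\overline X$ full-dimensional: for example, $X=\{\mathbf 0,\mathbf 1\}$ gives a segment. So we argue through the center $c=\frac12\mathbf 1$. The $\frac12\mathrm{UJ}$ vertices in $[0,1]^n$ are the points of $\{0,\frac12,1\}^n$. Every simplex of the division is a face of an $n$-simplex \eqref{eq:half-uj-simplex-signed-chain}, so its vertex set is a subset of a chain
\[c,\ c+\tfrac12 s_{\pi(1)},\ \ldots,\ v.\]
Fix a triangulation $\mathcal K$ of $\overline X$ by $\frac12\mathrm{UJ}$-simplices. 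The points of $X$ are vertices of $[0,1]^n$, hence extreme points of $\overline X$, hence vertices of $\mathcal K$.

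\textbf{Key step: $c\in\overline X$.} We show that $c$ lies in $\overline X$, and in fact that $c$ is a vertex of $\mathcal K$. Take a maximal simplex $\sigma\in\mathcal K$ containing some $x\in X$. Its vertices form a subchain of some signed chain ending at $v=x$, with all coordinates of the non-$c$ vertices in $\{0,\frac12,1\}$.

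Suppose $c\notin\sigma$, and consider the coordinates that equal $\frac12$ at every vertex of $\sigma$. On $\sigma$ these coordinates are constant, equal to $\frac12$. Take a coordinate $i$ and a point $y\in X$ with $y_i\ne x_i$; such a $y$ exists because $I_X=[n]$. Moving from the relative interior of $\sigma$ toward $y$ must eventually leave $\sigma$ through a facet. The adjacent simplex across that facet belongs to $\mathcal K$, because $\overline X$ is convex and the complex covers it. We then iterate: the union of simplices of $\mathcal K$ is convex. On the other hand, the cells of the division are ``star-shaped toward $c$'' in the sense of \eqref{eq:half-uj-simplex-orthant-containment}: each simplex lies in the closed orthant of its $0$-$1$ vertex.

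Rather than pursuing this iteration, the intended cleaner argument is as follows. Since $X$ contains points on both sides of every hyperplane $u_i=\frac12$, and $\overline X$ is convex, $\overline X$ meets every such hyperplane. Now consider the simplex $\tau$ of $\mathcal K$ whose relative interior contains $\bar x:=\frac1{|X|}\sum_{x\in X}x$. By orthant containment \eqref{eq:half-uj-simplex-orthant-containment} and the chain structure, a simplex in the closed orthant of $v$ that contains $c$ in its closure is one whose chain starts at $c$. We show that $c\in\overline X$ by contradiction: otherwise some simplex of $\mathcal K$ would contain, in its relative interior, points of two different open orthants. This is impossible, since every $\frac12\mathrm{UJ}$-simplex not containing $c$ lies in the closed orthant of its own $0$-$1$ vertex, and in fact in an orthant-face. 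We expect this topological bookkeeping to be the main obstacle.

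\textbf{Conclusion.} Once $c\in\overline X$, write $c$ as a point of some simplex $\tau\in\mathcal K$. Its vertices are points of $\overline X\cap\{0,\frac12,1\}^n$ lying on one signed chain $c=w_0,w_1,\ldots,w_m=v$. Since $c$ is itself a division vertex, $c$ is a vertex of $\mathcal K$.

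Now take the maximal simplices of $\mathcal K$ containing $c$. Each is a full signed chain starting at $c$, so its $0$-$1$ vertex lies in $X$, since the $0$-$1$ vertices of $\mathcal K$ are extreme points of $\overline X$ and hence points of $X$. The link of $c$ in $\mathcal K$ is a triangulation of a neighborhood of $c$ in $\overline X$.

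We then apply central symmetry. Since $c\in\overline X$, we can write $c=\sum\lambda_x x$ with $\lambda_x\ge0$ and $\sum\lambda_x=1$. Choose a direction $w$ pointing from $c$ into the relative interior of a maximal simplex with $0$-$1$ vertex $d$. Then $-w$ points into some simplex of $\mathcal K$, because $c$ lies in the relative interior of $\overline X$ within its affine hull; this needs $c$ to be relatively interior, which follows from $I_X=[n]$ together with $c$ being a barycentric-type point. That simplex lies in the closed orthant of its $0$-$1$ vertex $d'$ by \eqref{eq:half-uj-simplex-orthant-containment}. Choosing $w$ generic, so that all its coordinates are nonzero, forces $d'=d^{\operatorname{opp}}$. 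Hence $d^{\operatorname{opp}}\in X$.
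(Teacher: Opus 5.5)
Your reduction to $I_X=[n]$ is fine, but both steps that carry the proof have genuine gaps.

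\textbf{The key step $c\in\overline X$ is not proved.} The contradiction you sketch cannot work. By \eqref{eq:half-uj-simplex-orthant-containment}, no $\frac12\mathrm{UJ}$-simplex ever meets two different open orthants, whether or not $c\in\overline X$, and nothing you wrote links $c\notin\overline X$ to such a simplex. An argument in your geometric spirit that does work is the following. A $\frac12\mathrm{UJ}$-simplex without vertex $c$ has all its vertices among the points $c+\frac12(s_{\pi(1)}+\cdots+s_{\pi(k)})$ with $k\ge1$. These points share the same $\pi(1)$-th coordinate, which lies in $\{0,1\}$, so the simplex lies in a facet of $[0,1]^n$. If $c\notin\overline X$, then $c$ is a vertex of no simplex of $\mathcal K$, so $\overline X\subseteq\partial[0,1]^n$. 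A convex subset of the boundary of a polytope lies in a single facet, which contradicts $I_X=[n]$. The paper argues differently: it uses Lemma~\ref{lem:halfUJ-triangulable-iff-convex} to see that $\overline X\cap(\frac12\mathbb Z)^n$ is closed under $[\cdot]_{\frac12}$-rounding of midpoints, and iterating this on points of $X$ yields $c$.

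\textbf{The central-symmetry step needs $c\in\operatorname{relint}\overline X$, and that claim is false.} Take $n=2$ and $X=\{(0,0),(1,0),(0,1)\}$. Then $I_X=[2]$, and $\overline X$ is the union of the four $\frac12\mathrm{UJ}$-triangles $\overline{\{c,(\frac12,0),(0,0)\}}$, $\overline{\{c,(\frac12,0),(1,0)\}}$, $\overline{\{c,(0,\frac12),(0,0)\}}$ and $\overline{\{c,(0,\frac12),(0,1)\}}$. Yet $c$ lies on the boundary edge $[(1,0),(0,1)]$. For each of these triangles and each direction $w$ from $c$ into its interior, $-w$ leaves $\overline X$, so your argument produces no pair.

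The missing idea is the paper's. Let $Q$ be the minimal face of $\overline X$ containing $c$. Then $c\in\operatorname{relint}Q$ and the vertices of $Q$ lie in $X$. For a vertex $y$ of $Q$, the points $c-\varepsilon(y-c)$ lie in $Q\subseteq\overline X$ for small $\varepsilon>0$. These are relative interior points of $[c,2c-y]$, which is an edge of the $\frac12\mathrm{UJ}$ division. A simplex of $\mathcal K$ containing such a point meets this edge in a common face, hence contains the whole edge. Therefore $y^{\operatorname{opp}}=2c-y\in\overline X\cap\{0,1\}^n=X$.

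Alternatively, your generic-direction argument goes through if you run it inside $Q$ instead of $\overline X$, using the triangulation of $Q$ by the simplices of $\mathcal K$ contained in $Q$. Since $c\in Q$ and $c_i=\frac12$ for all $i$, no coordinate is fixed on $Q$, so directions with all coordinates nonzero are still available there.
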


\begin{pf}
If \(I_X=\emptyset\), then \(X\) is a singleton, and its unique
element is its own relative opposite point.
Hence the assertion is immediate.
We may therefore assume that \(I_X\neq\emptyset\).

For \(i\notin I_X\), let \(\alpha_i\in\{0,1\}\) be the fixed
\(i\)-th coordinate of \(X\), and define
\(c_X\in(\frac12\mathbb Z)^n\) by
\[
(c_X)_i=
\begin{cases}
\frac12, & i\in I_X,\\
\alpha_i, & i\notin I_X.
\end{cases}
\]
We first show that \(c_X\in\overline X\).
Since \(\overline X\) is \(\frac12\mathrm{UJ}\)-triangulable,
Lemma~\ref{lem:halfUJ-triangulable-iff-convex} implies that the
indicator function of \(X\) is \(\mathrm{L}\)-extendable through its
local convex extension.
Hence, the restriction of its local convex extension to
\((\frac12\mathbb Z)^n\) is \(\frac12\mathrm{UJ}\)-convex.
Its effective domain is
\(
\overline X\cap(\frac12\mathbb Z)^n,
\)
and therefore this set is closed under the
\(\frac12\mathrm{UJ}\)-rounding operations.
For \(a,b\in\{0,\frac12,1\}\), the definition of the
\(\frac12\mathrm{UJ}\)-rounding gives
\[
\left[\frac{a+b}{2}\right]_{\frac12}
=
\begin{cases}
a, & a=b,\\
\frac12, & a\neq b.
\end{cases}
\]
Hence, by repeatedly applying this operation to the points of
\(X\), every fixed coordinate remains unchanged, whereas every
coordinate in \(I_X\) becomes \(\frac12\).
We thus obtain
\(c_X\in\overline X\).

We next find a point of \(X\) whose relative opposite point also
belongs to \(X\).
Let \(Q\) be the minimal face of \(\overline X\) containing
\(c_X\), and choose a vertex \(y\) of \(Q\).
Then \(c_X\in\operatorname{relint}Q\).
Moreover, since \(Q\) is a face of \(\overline X\), its vertices
are vertices of \(\overline X\), and hence \(y\in X \subseteq \{0,1\}^n \).
In particular, we have $c_X \neq y$ and $\dim Q \ge 1$.

Let \(y^{\operatorname{opp}}\) be the relative opposite point of
\(y\).
By the definition of \(c_X\),
\(
y^{\operatorname{opp}}=2c_X-y.
\)
Since \(c_X\in\operatorname{relint}Q\) and $y \in Q$, for sufficiently small
\(0<\varepsilon<1\),
\[
(1-\varepsilon)c_X+\varepsilon y^{\operatorname{opp}}
=
c_X-\varepsilon(y-c_X)
\in Q\subseteq\overline X.
\]
Therefore,
\(
\operatorname{relint}
[c_X,y^{\operatorname{opp}}]\cap\overline X
\neq\emptyset.
\)

We now use the \(\frac12\mathrm{UJ}\)-triangulation to show that
the endpoint \(y^{\operatorname{opp}}\) also belongs to
\(\overline X\).
To this end, we first show that
\(
E:=[c_X,y^{\operatorname{opp}}]
\)
is an edge of the \(\frac12\mathrm{UJ}\) simplicial division.
In \eqref{eq:half-uj-simplex-signed-chain}, choose
\[
s_i:=
\begin{cases}
(2\alpha_i-1)e_i, & i\notin I_X,\\
(2y_i^{\operatorname{opp}}-1)e_i, & i\in I_X,
\end{cases}
\]
and choose the permutation \(\pi\) so that the coordinates in
\([n]\setminus I_X\) precede those in \(I_X\).
Then
\[
c+\frac12\sum_{i\notin I_X}s_i=c_X,
\qquad
c+\frac12\sum_{i=1}^n s_i=y^{\operatorname{opp}},
\]
where \(c=\frac12\mathbf 1\).
Hence \(c_X\) and \(y^{\operatorname{opp}}\) are vertices of the
same \(n\)-dimensional \(\frac12\mathrm{UJ}\)-simplex.
Therefore \(E\) is an edge of the
\(\frac12\mathrm{UJ}\) simplicial division.

Choose
\(u\in\operatorname{relint}E\cap\overline X\), and let
\(\Delta\) be a simplex in a
\(\frac12\mathrm{UJ}\)-triangulation of \(\overline X\) that
contains \(u\).
Since \(E\) and \(\Delta\) are simplices of the same
\(\frac12\mathrm{UJ}\) simplicial division,
\(E\cap\Delta\) is a common face.
As this common face contains a point of
\(\operatorname{relint}E\), it cannot be a face of \(E\).
Thus \(E\subseteq\Delta\), and hence
\(y^{\operatorname{opp}}\in\overline X\).

Finally,
\(y^{\operatorname{opp}}\in\{0,1\}^n\).
Since
\(
\overline X\cap\{0,1\}^n=X
\),
we obtain \(y^{\operatorname{opp}}\in X\).
Thus \(y,y^{\operatorname{opp}}\) form a relative opposite pair.
\qed
\end{pf}

We are now ready to state a characterization of \(\frac12\mathrm{UJ}\)-triangulability for subsets of the vertices of a unit hypercube.

\begin{theorem}[Subsets of the vertices of a unit hypercube]
\label{thm:subsets-of-unit-hypercube}
Let \(X\subseteq\{0,1\}^n\) be nonempty, and let
\(\delta_X:\mathbb Z^n\to\mathbb R\cup\{+\infty\}\)
denote its indicator function.
Then the following conditions are equivalent.
\begin{enumerate}
\item[\textup{(i)}]
The function \(\delta_X\) is \(\mathrm{L}\)-extendable through its local convex extension.

\item[\textup{(ii)}]
\(\overline X\) is \(\frac12\mathrm{UJ}\)-triangulable.

\item[\textup{(iii)}]
\(X\) is a \(\mathrm{D}\)-convex set.

\item[\textup{(iv)}]
\(X\) is majority-closed and contains a relative opposite pair.
\end{enumerate}
\end{theorem}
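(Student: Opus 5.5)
(i)$\Leftrightarrow$(ii) is Lemma~\ref{lem:halfUJ-triangulable-iff-convex}. It then suffices to prove (ii)$\Rightarrow$(iv)$\Rightarrow$(iii)$\Rightarrow$(ii).

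For (ii)$\Rightarrow$(iv), the relative opposite pair is supplied by Lemma~\ref{lem:halfUJ_triangulable_implies_relative_opposite_pair}. For majority-closure, apply Lemma~\ref{lem:halfUJ-triangulable-iff-convex}: the restriction $g$ of $\widetilde{\delta_X}$ to $\{0,\frac12,1\}^n$ is $\frac12\mathrm{UJ}$-convex and agrees with $\delta_X$ on $\{0,1\}^n$. Thus $\delta_X$ admits a $2$-submodular relaxation, and Lemma~\ref{cor:hirai-iwamasa-majority-closure} gives that $X=\dom\delta_X$ is majority-closed.

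For (iv)$\Rightarrow$(iii), let $d,d^{\operatorname{opp}}\in X$ be a relative opposite pair and take $T:=\{i\in I_X\mid d_i=1\}$. Then $\rho_T(d)$ is $0$ on $I_X$ and $\rho_T(d^{\operatorname{opp}})$ is $1$ on $I_X$; off $I_X$ both equal $\alpha$ up to the fixed reflection. Given $x,y\in X$, I claim
\[
x\wedge_T y=\operatorname{maj}(x,y,d),\qquad x\vee_T y=\operatorname{maj}(x,y,d^{\operatorname{opp}}).
\]
This is a coordinatewise check. For $i\in I_X$ with $d_i=0$, $i\notin T$, and $\operatorname{maj}(x_i,y_i,0)=\min(x_i,y_i)$. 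For $d_i=1$, $i\in T$, and the reflected min is $\max(x_i,y_i)=\operatorname{maj}(x_i,y_i,1)$. The join case is symmetric. For $i\notin I_X$, all four values equal $\alpha_i$. Majority-closure then gives (iii).

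For (iii)$\Rightarrow$(ii), suppose $X$ is closed under $\wedge_T,\vee_T$. Applying $\rho_T$, which preserves the $\frac12\mathrm{UJ}$ division, reduces to the case $T=\emptyset$, where $X$ is a distributive sublattice of $\{0,1\}^n$. Then $\overline X$ is the union of the Freudenthal simplices spanned by maximal chains of $X$. Each such chain simplex $\overline{\{x^0<x^1<\dots<x^k\}}$ with $x^j\in X$ lies in the Freudenthal triangulation restricted to $\overline X$. By Proposition~\ref{prop:discordant-freudenthal-refinement}, each Freudenthal simplex of $[0,1]^n$ is a union of $\frac12\mathrm{UJ}$-simplices. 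Hence the $\frac12\mathrm{UJ}$-simplices contained in $\overline X$ form a subcomplex covering $\overline X$.

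The main obstacle is this last step. It needs two facts. First, a distributive sublattice's convex hull is exactly triangulated by Freudenthal chain simplices of its elements; this is the standard $\mathrm{L}^\natural$ fact that a lattice-closed set in $\{0,1\}^n$ is integrally convex with Freudenthal triangulation. Second, the refinement restricts to lower-dimensional Freudenthal faces: every $\frac12\mathrm{UJ}$-simplex meeting the interior of a chain simplex lies inside it. I would prove the second fact from the orthant and signed-chain description~\eqref{eq:half-uj-simplex-orthant-containment}, by showing that lower-dimensional chain simplices are faces of full Freudenthal simplices, which are unions of $\frac12\mathrm{UJ}$-simplices. Alternatively, once (iii) is known one could verify (i) directly via $\frac12\mathrm{UJ}$-rounding closure of $\overline X\cap(\frac12\mathbb Z)^n$, but the triangulation route seems cleaner.
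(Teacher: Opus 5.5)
Your proposal is correct and is essentially the paper's proof: the same cycle (ii)$\Rightarrow$(iv)$\Rightarrow$(iii)$\Rightarrow$(ii), using Lemmas~\ref{lem:halfUJ_triangulable_implies_relative_opposite_pair} and~\ref{cor:hirai-iwamasa-majority-closure} for (ii)$\Rightarrow$(iv), the majority identities with $T=\{i\in I_X\mid d_i=1\}$ for (iv)$\Rightarrow$(iii) (you check them directly on $X$ with $\wedge_T,\vee_T$, while the paper checks them on $\rho_T(X)$), and Proposition~\ref{prop:discordant-freudenthal-refinement} for (iii)$\Rightarrow$(ii). The paper treats the ``obstacle'' you flag as immediate, and your face argument does settle it, provided you state the needed fact as ``every Freudenthal chain simplex is a union of $\frac12\mathrm{UJ}$-simplices''; the version ``every $\frac12\mathrm{UJ}$-simplex meeting its interior lies inside it'' fails for lower-dimensional faces.
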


\begin{pf}
The equivalence \(\textup{(i)}\Leftrightarrow\textup{(ii)}\) follows from
Lemma~\ref{lem:halfUJ-triangulable-iff-convex}.
It remains to prove
\(\textup{(iii)}\Rightarrow\textup{(ii)}
\Rightarrow\textup{(iv)}
\Rightarrow\textup{(iii)}\).

We first prove \(\textup{(iii)}\Rightarrow\textup{(ii)}\). 
Suppose that \(X\) is \(\mathrm{D}\)-convex.
Then \(X\) is compatible with a reflected Freudenthal simplicial division.
By Proposition~\ref{prop:discordant-freudenthal-refinement}, the \(\frac12\mathrm{UJ}\) simplicial division refines this division.
Hence \(\overline X\) is \(\frac12\mathrm{UJ}\)-triangulable.

We next prove \(\textup{(ii)}\Rightarrow\textup{(iv)}\). 
Suppose that \(\overline X\) is \(\frac12\mathrm{UJ}\)-triangulable.
By Lemma~\ref{lem:halfUJ-triangulable-iff-convex}, the indicator function \(\delta_X\) is \(\mathrm{L}\)-extendable through its local convex extension,
and hence is \(\mathrm{L}\)-extendable.
Lemma~\ref{cor:hirai-iwamasa-majority-closure} therefore implies that \(X=\dom\delta_X\) is majority-closed.
Moreover, by Lemma~\ref{lem:halfUJ_triangulable_implies_relative_opposite_pair}, \(X\) contains a relative opposite pair.
Thus \(\textup{(iv)}\) holds.

Finally, we prove
\(\textup{(iv)}\Rightarrow\textup{(iii)}\).
Choose a relative opposite pair
\(d,d^{\operatorname{opp}}\in X\), and put
\(
T:=\{i\in I_X\mid d_i=1\}.
\)
Since coordinate reflections commute with the majority operation, \(Y:=\rho_T(X)\) is majority-closed.
Moreover, \(\rho_T(d)\) has value \(0\) and \(\rho_T(d^{\operatorname{opp}})\) has value \(1\) in every coordinate in \(I_X\), while every coordinate outside \(I_X\) is fixed on \(Y\).
Thus, for \(x,y\in Y\),
\[
x\wedge y
=
\operatorname{maj}(x,y,\rho_T(d)),
\qquad
x\vee y
=
\operatorname{maj}(x,y,\rho_T(d^{\operatorname{opp}})).
\]
Since \(Y\) is majority-closed,
\(x\wedge y,x\vee y\in Y\).
Hence \(Y=\rho_T(X)\) is closed under componentwise meet and join, and therefore \(X\) is \(\mathrm{D}\)-convex.
\qed
\end{pf}

Figure~\ref{fig:local-set-classes} illustrates Theorem~\ref{thm:subsets-of-unit-hypercube}.
Examples~\ref{ex:standard-simplex} and~\ref{ex:relative-opposite-not-majority}
show that the conditions of being majority-closed and
containing a relative opposite pair are independent.

\begin{figure}
\centering
\includegraphics[width=.80\linewidth]{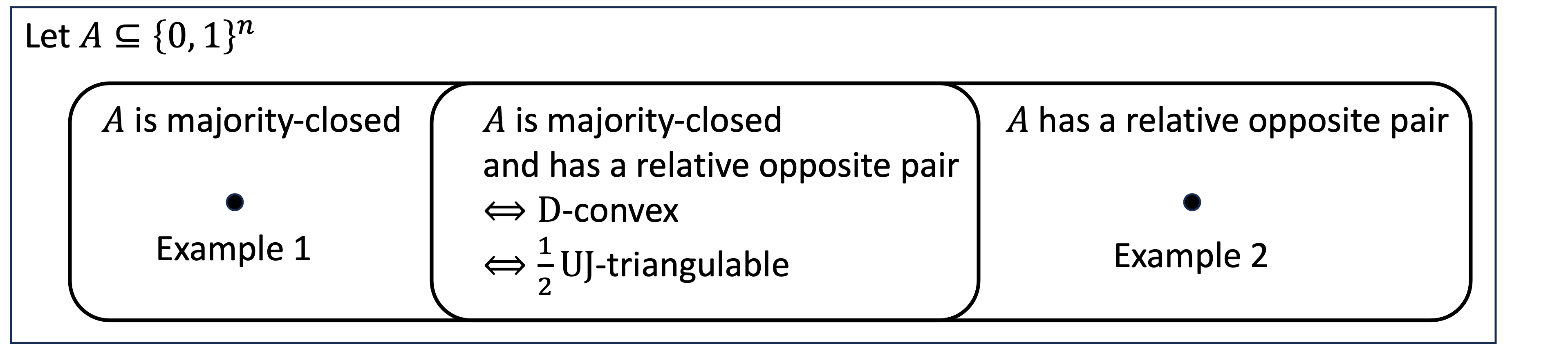}
\caption{
Relations among local set classes for subsets of \(\{0,1\}^n\).}
\label{fig:local-set-classes}
\end{figure}

\begin{example}
\label{ex:standard-simplex}
Let
\(
X_1
:=
\{(0,0,0),(1,0,0),(0,1,0),(0,0,1)\}
\subseteq\{0,1\}^3.
\)
This is the vertex set of the standard simplex.
The set \(X_1\) is majority-closed.
On the other hand, we have \(I_{X_1}=\{1, 2, 3\}\).
Hence the relative opposite point of \(d\in X_1\) is obtained by reversing all coordinates of \(d\).
Thus the relative opposite points of
\((0,0,0),(1,0,0),(0,1,0)\), and \((0,0,1)\) are
\((1,1,1),(0,1,1),(1,0,1)\), and \((1,1,0)\), respectively.
None of them belongs to \(X_1\).
Therefore \(X_1\) has no relative opposite pair.
Consequently, by Theorem~\ref{thm:subsets-of-unit-hypercube}, \(X_1\) is not \(\frac12\mathrm{UJ}\)-triangulable.
\end{example}

\begin{example}
\label{ex:relative-opposite-not-majority}
Let
\(
X_2
:=
\{(0,0,0),(1,0,0),(0,1,0),(1,1,1)\}
\subseteq\{0,1\}^3\).
We have \(I_{X_2}=\{1, 2, 3\}\).
Hence the relative opposite point of \(d\in X_2\) is obtained by reversing all coordinates of \(d\).
In particular, the relative opposite point of \((0,0,0)\) is \((1,1,1)\), which belongs to \(X_2\).
Thus \(X_2\) contains a relative opposite pair.
However, \(X_2\) is not majority-closed.
Indeed,
\(
\operatorname{maj}\bigl((1,0,0),(0,1,0),(1,1,1)\bigr)
=
(1,1,0)
\notin X_2\).
Thus, by Theorem~\ref{thm:subsets-of-unit-hypercube}, \(X_2\) is not \(\frac12\mathrm{UJ}\)-triangulable.
\end{example}

Every subset of \(\{0,1\}^n\) is integrally convex;
see \cite[Example~2.2]{murota2023recent}.
Hence the indicator function
\(\delta_{X_2}:\mathbb Z^3\to\mathbb R\cup\{+\infty\}\)
is integrally convex.
Since \(X_2\) is not majority-closed,
Lemma~\ref{cor:hirai-iwamasa-majority-closure} implies that
\(\delta_{X_2}\) is not \(\mathrm{L}\)-extendable.
Thus the inclusion in
Theorem~\ref{thm:l-extendability-implies-integral-convexity}
is strict.

\subsection{Functional structure on a unit hypercube}
\label{subsec:functional-unit-hypercube}

We now extend the characterization for indicator functions
to arbitrary functions on the vertices of a unit hypercube.
In preparation for this extension, the following lemma shows
that a single coordinate reflection can be chosen for
the minimizer sets of all linear perturbations.

\begin{lemma}
\label{lem:common_reflection_for_all_argmins}
Let \(f:\{0,1\}^n\to\mathbb R\cup\{+\infty\}\), and let
\(\tilde f\) be its local convex extension.
Assume that
\(\tilde f|_{\{0,\frac12,1\}^n}\) is
\(\frac12\mathrm{UJ}\)-convex.
For \(p\in\mathbb R^n\), put
\(A_p:=\operatorname*{argmin}_{x\in\{0,1\}^n}f[-p](x)\).
Then there exists \(T\subseteq[n]\) such that, for every
\(p\in\mathbb R^n\), the set \(\rho_T(A_p)\) is closed under
componentwise meet and join, where \(\rho_T\) is the coordinate
reflection defined by
\eqref{eq:coordinate-reflection-on-unit-hypercube}.
\end{lemma}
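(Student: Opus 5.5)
The plan is to reduce everything to the set case of Theorem~\ref{thm:subsets-of-unit-hypercube}, using the faces of the convex function $\tilde f|_{[0,1]^n}$. We may assume $\dom f\neq\emptyset$, since otherwise every $A_p$ is empty and any $T$ works.

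\textbf{Step 1: each $A_p$ is D-convex.} Fix $p$. Since $\tilde f$ is a convex extension on $[0,1]^n$ (it is the convex envelope of $f$ on the cube), the set $\overline{A_p}$ is a face of the lower convex hull. This face is exactly $\argmin(\tilde f-\langle p,\cdot\rangle)$ over $[0,1]^n$, and $\tilde f-\langle p,\cdot\rangle$ is affine on it. Perturbation by $p$ preserves $\frac12\mathrm{UJ}$-convexity, because the roundings preserve the midpoint sum:
\[
[m]_{\frac12}+[m]_1=2m .
\]
Hence the minimizer set of $g:=(\tilde f-\langle p,\cdot\rangle)|_{\{0,\frac12,1\}^n}$ is closed under both roundings; this is the usual argument that argmins of midpoint-convex functions are midpoint-convex. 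This minimizer set equals $\overline{A_p}\cap\{0,\frac12,1\}^n$, which is the domain of $\widetilde{\delta_{A_p}}|_{\{0,\frac12,1\}^n}$. By the argument of Lemma~\ref{lem:halfUJ-triangulable-iff-convex}, $\delta_{A_p}$ is then L-extendable through its local convex extension. Theorem~\ref{thm:subsets-of-unit-hypercube} now gives that $A_p$ is majority-closed and contains a relative opposite pair, so $A_p$ is D-convex with respect to some $T_p$.

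\textbf{Step 2: a common $T$.} The $T_p$ need not agree, so we choose $T$ globally. Take $p$ generic so that $A_p$ is a single vertex $d$; this is the first candidate, and it fails. A better choice is to look at the full-dimensional structure. Let $F$ be the smallest face of the cube containing $\dom f$. Apply Lemma~\ref{lem:halfUJ_triangulable_implies_relative_opposite_pair} to $X:=\dom f$: the restriction of $\tilde f$ has domain $\overline X\cap\{0,\frac12,1\}^n$, so $\overline X$ is $\frac12\mathrm{UJ}$-triangulable, and $X$ has a relative opposite pair $d,d^{\mathrm{opp}}$.

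The natural guess is $T=\{i\in I_X: d_i=1\}$, but it must be made compatible with the regular subdivision induced by $f$. So we refine it as follows. The lower faces of $\tilde f$ form a regular subdivision of $\overline X$ into cells $\overline{A_p}$. Since $\tilde f$ restricted to half-integers is $\frac12\mathrm{UJ}$-convex, its $\frac12\mathrm{UJ}$ simplicial extension is convex and agrees with $\tilde f$ on half-integral points. I would argue that each cell is then a union of $\frac12\mathrm{UJ}$-simplices, so the subdivision is refined by the $\frac12\mathrm{UJ}$ division.

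Consider $c_X$ as in the proof of Lemma~\ref{lem:halfUJ_triangulable_implies_relative_opposite_pair}. The cell $\overline{A_{p_0}}$ containing $c_X$ in its relative interior contains the edge argument's opposite pairs. Every edge $[c_X,v]$ from $c_X$ to a vertex $v$ of $X$ lies in the $\frac12\mathrm{UJ}$ division, and so do the edges to its opposite. We pick one pair $d,d^{\mathrm{opp}}\in A_{p_0}$ and set $T:=\{i\in I_X:d_i=1\}$.

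\textbf{Step 3: verification, and the main obstacle.} For an arbitrary $p$, $\rho_T(A_p)$ is majority-closed by Step~1, so by the proof of (iv)$\Rightarrow$(iii) it suffices that $\rho_T(A_p)$ contains, relative to $F_{A_p}$, its coordinatewise min and max. The key claim is that in $\rho_T$-coordinates the minimum of $A_p$ lies in $A_p$.

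The approach for this claim is as follows. Every maximal $\frac12\mathrm{UJ}$-simplex containing $c_X$ lies in the orthant of its 0-1 vertex, by \eqref{eq:half-uj-simplex-orthant-containment}. Since $\tilde f$ is affine on $\frac12\mathrm{UJ}$-simplices, the restriction of $\tilde f$ to the face $F_{A_p}$ is a convex function there. The claim then follows from convexity of the $\frac12\mathrm{UJ}$ extension along the segment joining $\rho_T(d)$ and $\rho_T(d^{\mathrm{opp}})$ projected onto $F_{A_p}$.

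This projection-and-compatibility argument is where I expect the real difficulty: showing that one reflection is valid for all cells simultaneously, rather than cell by cell. If it fails, the fallback is induction on $n$. Restrict to the facets $x_i=0$ and $x_i=1$, obtain reflections there by induction, and show via the $c_X$ edge argument that they glue into a single $T$.
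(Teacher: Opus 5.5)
Your Step~1 matches the paper, and your choice of \(T\) is in fact a correct one. Since \(c_X\in\overline{A_{p_0}}\) has every \(I_X\)-coordinate equal to \(\frac12\), you get \(I_{A_{p_0}}=I_X\). Hence the relative opposite pair that Step~1 provides in \(A_{p_0}\) is a relative opposite pair with respect to \(X\); you should say this rather than appeal to edges. (The paper instead takes a relative opposite pair in the intersection of all maximal cells, which has the same effect.)

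The genuine gap is Step~3. It is the heart of the lemma, and you leave it unproved, as you note yourself. The sketch via ``convexity along the segment joining \(\rho_T(d)\) and \(\rho_T(d^{\operatorname{opp}})\) projected onto \(F_{A_p}\)'' does not show that the componentwise minimum of \(\rho_T(A_p)\) lies in \(\rho_T(A_p)\). For a lower-dimensional cell, \(d\) and \(d^{\operatorname{opp}}\) need not belong to \(A_p\), and nothing in the sketch relates \(A_p\) to the chosen pair. The fallback induction on \(n\) is not specified either.

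What is missing is a two-stage argument through the maximal cells of the complex \(\{\overline{A_p}\}\).

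\emph{Maximal cells.} Every maximal cell \(\overline{A_{p_k}}\) has dimension \(\dim\overline X\), because the support \(\overline X\) is convex. So \(I_{A_{p_k}}=I_X\), and by Step~1 the cell contains a relative opposite pair, whose midpoint is \(c_X\). Hence \(c_X\in\overline{A_{p_k}}\). Now \(\overline{A_{p_0}}\cap\overline{A_{p_k}}\) is a face of \(\overline{A_{p_0}}\) containing its relative interior point \(c_X\), so \(\overline{A_{p_0}}\subseteq\overline{A_{p_k}}\). Thus \(d,d^{\operatorname{opp}}\) is a relative opposite pair of every maximal cell. The majority identities from (iv)\(\Rightarrow\)(iii) then show that each \(\rho_T(A_{p_k})\) is closed under \(\wedge\) and \(\vee\).

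\emph{Arbitrary cells.} Each \(\overline{A_p}\) is a face of some maximal cell \(\overline{A_{p_k}}\). For \(x,y\in\rho_T(A_p)\), the points \(x\wedge y\) and \(x\vee y\) lie in \(\rho_T(A_{p_k})\). They have the same midpoint as \(x\) and \(y\), and that midpoint lies in the face \(\rho_T(\overline{A_p})\). By the face property, both points lie in \(\rho_T(\overline{A_p})\cap\{0,1\}^n=\rho_T(A_p)\).

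This face-inheritance step is exactly what handles cells that do not contain \(d,d^{\operatorname{opp}}\), and it is the idea your proposal lacks.
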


\begin{pf}
Regard \(\tilde f\) as taking the value \(+\infty\) outside
\([0,1]^n\).
Since \(\tilde f\) is the convex envelope of \(f\) on the finite
set \(\{0,1\}^n\), it is a polyhedral convex function.
Moreover, \(\dom\tilde f=\overline{\dom f}\) and
\(\argmin_{x\in\mathbb R^n}\tilde f[-p](x)=\overline{A_p}\)
for every \(p\in\mathbb R^n\).

Linear perturbations preserve \(\frac12\mathrm{UJ}\)-convexity.
Thus, for every \(p\in\mathbb R^n\),
\(\tilde f[-p]|_{\{0,\frac12,1\}^n}\) is
\(\frac12\mathrm{UJ}\)-convex.
Since \(A_p\) is nonempty and consists of integer points, the above
identity implies that the minimizer set of this restriction is
\(\overline{A_p}\cap\{0,\frac12,1\}^n\), which is itself
\(\frac12\mathrm{UJ}\)-convex.
Lemma~\ref{lem:halfUJ-triangulable-iff-convex} then implies that
\(A_p\) is \(\frac12\mathrm{UJ}\)-triangulable.
Hence, Theorem~\ref{thm:subsets-of-unit-hypercube} shows that
every \(A_p\) is majority-closed and contains a relative opposite pair.

The affinity domains of a polyhedral convex function form a
polyhedral complex whose support is its effective domain
\cite[Sections~2.3.4--2.3.5]{iwamasa2026towards}.
Consequently,
\(\{\overline{A_p}\mid p\in\mathbb R^n\}\)
is a finite polyhedral complex with support \(\overline{\dom f}\).

Let
\(\overline{A_{p_1}},\ldots,\overline{A_{p_m}}\)
be the maximal members of this complex.
Since the support \(\overline{\dom f}\) is convex, every maximal
member has dimension \(\dim\overline{\dom f}\).
Since \(A_{p_k}\subseteq\dom f\), it follows that
\(\operatorname{aff}\overline{A_{p_k}}
=\operatorname{aff}\overline{\dom f}\) for every \(k\in[m]\).
A coordinate is fixed on a set if and only if it is fixed on its
affine hull; hence
\(I_{A_{p_k}}=I_{\dom f}\) for every \(k\in[m]\).

We now show that these maximal affinity domains contain a common
point determined only by \(\dom f\).
For every \(i\in[n]\setminus I_{\dom f}\), let
\(\alpha_i\in\{0,1\}\) be the fixed \(i\)-th coordinate of
\(\dom f\), and define \(c\in[0,1]^n\) by
\[
c_i=
\begin{cases}
\frac12, & i\in I_{\dom f},\\
\alpha_i, & i\in[n]\setminus I_{\dom f}.
\end{cases}
\]
As shown above, each \(A_{p_k}\) contains a relative opposite pair.
Since \(I_{A_{p_k}}=I_{\dom f}\), the midpoint of such a pair is \(c\).
Thus \(c\in\overline{A_{p_k}}\) for every \(k\in[m]\), and hence
\(c\in\bigcap_{k=1}^m\overline{A_{p_k}}\).

We next realize this common intersection as a single affinity domain.
Since the affinity domains form a polyhedral complex and their
intersection is nonempty, there exists \(q\in\mathbb R^n\) such that
\(\overline{A_q}
=\bigcap_{k=1}^m\overline{A_{p_k}}\).
In particular, \(c\in\overline{A_q}\).

We now determine the nonfixed coordinates of \(A_q\).
Since \(A_q\subseteq\dom f\), we have
\(I_{A_q}\subseteq I_{\dom f}\).
Conversely, for every \(i\in I_{\dom f}\), the point
\(c\in\overline{A_q}\) satisfies \(c_i=\frac12\).
If the \(i\)-th coordinate were fixed on \(A_q\), then it would be
fixed to either \(0\) or \(1\) throughout \(\overline{A_q}\), a
contradiction.
Thus \(I_{A_q}=I_{\dom f}\).

Choose a relative opposite pair
\(d,d^{\operatorname{opp}}\in A_q\).
For every \(k\in[m]\), we have
\(\overline{A_q}\subseteq\overline{A_{p_k}}\).
Since \(d,d^{\operatorname{opp}}\in\{0,1\}^n\), this implies
\(d,d^{\operatorname{opp}}\in A_{p_k}\).
Hence, as \(I_{A_q}=I_{A_{p_k}}=I_{\dom f}\),\(d,d^{\operatorname{opp}}\) is a relative opposite pair of every
\(A_{p_k}\).

Set \(T:=\{i\in I_{\dom f}\mid d_i=1\}\).
We show that the same reflection \(T\) works for all maximal
affinity domains.
Coordinate reflections commute with the majority operation,
and hence \(\rho_T(A_{p_k})\) is majority-closed for every
\(k\in[m]\).
Moreover, \(\rho_T(d)\) has value \(0\) and
\(\rho_T(d^{\operatorname{opp}})\) has value \(1\) in every
coordinate in \(I_{\dom f}\), while every coordinate outside
\(I_{\dom f}\) is fixed on \(\rho_T(A_{p_k})\).
Thus, for \(x,y\in\rho_T(A_{p_k})\),
\[
x\wedge y
=
\operatorname{maj}(x,y,\rho_T(d)),
\qquad
x\vee y
=
\operatorname{maj}(x,y,\rho_T(d^{\operatorname{opp}})).
\]
Since $\rho_T(A_{p_k})$ is majority-closed, the preceding identities imply that $x\wedge y,x\vee y\in\rho_T(A_{p_k})$.
Hence $\rho_T(A_{p_k})$ is closed under componentwise meet and join
for every $k\in[m]$.

Finally, fix \(p\in\mathbb R^n\).
Every affinity domain is a face of some maximal affinity domain
\cite[Lemma~2.5(3)]{iwamasa2026towards}.
Hence \(\overline{A_p}\) is a face of
\(\overline{A_{p_k}}\) for some \(k\in[m]\), and therefore
\(\rho_T(\overline{A_p})\) is a face of
\(\rho_T(\overline{A_{p_k}})\).

Take \(x,y\in\rho_T(A_p)\).
Since \(\rho_T(A_{p_k})\) is closed under componentwise meet and join,
both \(x\wedge y\) and \(x\vee y\) belong to \(\rho_T(A_{p_k})\).
Moreover, since \(x,y\in\rho_T(\overline{A_p})\), their midpoint
belongs to \(\rho_T(\overline{A_p})\).
Since
\((x+y)/2=((x\wedge y)+(x\vee y))/2\)
and \(\rho_T(\overline{A_p})\) is a face of
\(\rho_T(\overline{A_{p_k}})\), we obtain
\(x\wedge y,x\vee y\in\rho_T(\overline{A_p})\).
Finally,
\(x\wedge y,x\vee y\in\{0,1\}^n\) and
\(\rho_T(\overline{A_p})\cap\{0,1\}^n=\rho_T(A_p)\).
Therefore \(x\wedge y,x\vee y\in\rho_T(A_p)\), and
\(\rho_T(A_p)\) is closed under componentwise meet and join.
\qed
\end{pf}

We can now prove the characterization for functions on a unit hypercube.

\begin{theorem}[Functions on the vertices of a unit hypercube]
\label{thm:functions-on-unit-hypercube}
Let \(f:\{0,1\}^n\to\mathbb R\cup\{+\infty\}\) be a function, and let \(\tilde f\) be its local convex extension on \([0,1]^n\).
Then the following conditions are equivalent.
\begin{enumerate}
\item[\textup{(i)}]
The function \(f\) is \(\mathrm{L}\)-extendable through its local convex extension, that is, the restriction of \(\tilde f\) to \(\{0,\frac12,1\}^n\) is \(\frac12\mathrm{UJ}\)-convex.
\item[\textup{(ii)}]
The function \(f\) is \(\mathrm{D}\)-convex.
\end{enumerate}
\end{theorem}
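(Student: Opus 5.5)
We prove the two implications separately. Both directions reduce the functional statement to statements about minimizer sets $A_p=\argmin f[-p]$ of linear perturbations. These sets are exactly the vertex sets of the affinity domains of the polyhedral convex function $\tilde f$ on $[0,1]^n$.

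\emph{(ii)$\Rightarrow$(i).} Suppose $f$ is $\mathrm{D}$-convex with respect to some $T\subseteq[n]$. Then $f\circ\rho_T$ is submodular on $\{0,1\}^n$, i.e., $\mathrm{L}^\natural$-convex on the cube. Its local convex extension therefore coincides with its Lov\'asz (Freudenthal) extension, which is convex. Transporting back by $\rho_T$, the local convex extension $\tilde f$ is affine on each simplex of the reflected Freudenthal division determined by $T$. By Proposition~\ref{prop:discordant-freudenthal-refinement}, restricted to the unit cube, every $\frac12\mathrm{UJ}$-simplex lies in a simplex of this reflected division. Hence $\tilde f$ is affine on every $\frac12\mathrm{UJ}$-simplex. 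This gives two facts:
\begin{itemize}
\item the $\frac12\mathrm{UJ}$ simplicial extension of $g:=\tilde f|_{\{0,\frac12,1\}^n}$ (set to $+\infty$ outside the cube) equals $\tilde f$ itself;
\item the infinite-valued case is handled because $\dom f$ is $\mathrm{D}$-convex, hence a union of reflected Freudenthal simplices.
\end{itemize}
Since $\tilde f$ is convex, Proposition~\ref{prop:half-uj-convex-extension-characterization} yields that $g$ is $\frac12\mathrm{UJ}$-convex. The point needing care is the cube boundary: $g$ is defined only on $\{0,\frac12,1\}^n$, so we extend it by $+\infty$ to $(\frac12\mathbb Z)^n$ and apply the proposition there. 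The midpoint inequality for points of $\{0,\frac12,1\}^n$ is unaffected, because this set is closed under the roundings.

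\emph{(i)$\Rightarrow$(ii).} Apply Lemma~\ref{lem:common_reflection_for_all_argmins} to obtain a single $T$ such that $\rho_T(A_p)$ is a sublattice for every $p$. Put $h:=f\circ\rho_T$. Its minimizer sets of linear perturbations are $\rho_T(A_{p'})$ for the suitably transformed $p'$, so every such set is closed under $\wedge,\vee$. We then need the standard characterization: a function on $\{0,1\}^n$ whose argmin sets under all linear perturbations are nonempty sublattices (or empty, when $\dom h=\emptyset$) is submodular. This is the discrete analogue of the fact that a polyhedral convex function whose affinity domains are all $\mathrm{L}^\natural$-convex polytopes is $\mathrm{L}^\natural$-convex.

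To prove this characterization, fix $x,y\in\dom h$ and assume $h(x)+h(y)<h(x\wedge y)+h(x\vee y)$. Look at the convex envelope $\tilde h$ on the segment through the midpoint $m=(x+y)/2=((x\wedge y)+(x\vee y))/2$, and take a supporting $p$ of $\tilde h$ at $m$. Then $\overline{A_p}\ni m$. Since $A_p$ is a sublattice, $\overline{A_p}$ is a union of Freudenthal simplices. Consequently $m$ is a convex combination of a chain in $A_p$, and the smallest such chain representation of $m$ contains $x\wedge y$ and $x\vee y$, giving
\[
\tilde h(m)=\tfrac12\bigl(h(x\wedge y)+h(x\vee y)\bigr).
\]
On the other hand $\tilde h(m)\le\frac12(h(x)+h(y))$, which gives the desired inequality $h(x)+h(y)\ge h(x\wedge y)+h(x\vee y)$. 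The precise claim to verify is this: the minimal face of the Freudenthal complex containing $m$ has vertex set consisting of the chain generated by $x\wedge y\le x\vee y$ together with the fixed coordinates, so $x\wedge y,x\vee y$ lie on it. This holds because the coordinates of $m$ take only the values $0,\frac12,1$.

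Infinite values also need care. If $x\wedge y$ or $x\vee y$ lies outside $\dom h$, we must show $\dom h$ is itself a sublattice. This follows by taking $p=0$ on the indicator-like structure, or more directly from Theorem~\ref{thm:subsets-of-unit-hypercube} applied to $\dom f$, whose $\frac12\mathrm{UJ}$-convexity follows from (i). One must still check that the reflection $T$ found for $\dom f$ matches the one from Lemma~\ref{lem:common_reflection_for_all_argmins}. It does, because the lemma's $T$ makes every maximal affinity domain a sublattice, and $\overline{\dom f}$ is the union of these domains, each being a union of reflected Freudenthal simplices.

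I expect the main obstacle to be this last passage in (i)$\Rightarrow$(ii): deriving pointwise submodularity of $f\circ\rho_T$ from the lattice property of all argmin sets, including the compatibility of the common $T$ with $\dom f$.
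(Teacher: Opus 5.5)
Your proposal is correct and follows the paper's proof in both directions. For (ii)$\Rightarrow$(i) you use the Lov\'asz extension, Proposition~\ref{prop:discordant-freudenthal-refinement} and Proposition~\ref{prop:half-uj-convex-extension-characterization}; for (i)$\Rightarrow$(ii) you use Lemma~\ref{lem:common_reflection_for_all_argmins} applied to $f\circ\rho_T$, and the only difference is that you prove the argmin characterization of submodularity inline (subgradient at $m=(x+y)/2$ plus the Freudenthal edge $[x\wedge y,x\vee y]$) where the paper cites \cite[Theorem~7.17]{murota2003discrete}. Your midpoint argument already gives $x\wedge y,x\vee y\in A_p\subseteq\dom h$ whenever $x,y\in\dom h$, so the closing paragraph on infinite values and on matching $T$ with $\dom f$ is unnecessary (and the ``$p=0$'' remark would not work anyway, since $A_0=\argmin f$, not $\dom f$).
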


\begin{pf}
We first prove \(\textup{(i)}\Rightarrow\textup{(ii)}\). 
For \(p\in\mathbb R^n\), put 
\( 
A_p := \operatorname*{argmin}_{x\in\{0,1\}^n} f[-p](x). 
\) By Lemma~\ref{lem:common_reflection_for_all_argmins}, there exists \(T\subseteq[n]\) such that, for every \(p\in\mathbb R^n\), the set \(\rho_T(A_p)\) is closed under componentwise meet and join. 
Define \(g:=f\circ\rho_T\).
By \eqref{eq:reflected-meet-join} and
\eqref{eq:d-convex-function-unit-hypercube}, it suffices to show that
\(g\) is submodular.
Coordinate reflection transforms a linear perturbation of \(g\) into
a linear perturbation of \(f\), up to an additive constant.
Hence, as the perturbation vector ranges over \(\mathbb R^n\), the minimizer sets of the linear perturbations of \(g\) are exactly the sets \(\rho_T(A_p)\). Since all these sets are closed under componentwise meet and join, the characterization of submodularity by minimizer sets of linear perturbations~\cite[Theorem~7.17]{murota2003discrete} implies that \(g\) is submodular. Therefore \(f\) is \(\mathrm D\)-convex.

We next prove
\(\textup{(ii)}\Rightarrow\textup{(i)}\).
Assume that \(f\) is \(\mathrm{D}\)-convex.
Then there exists \(T\subseteq[n]\) such that
\(f\circ\rho_T\) is submodular.
Since the local convex extension of a submodular function on
\(\{0,1\}^n\) coincides with its Lovász extension,
\(\tilde f\) is convex and piecewise affine with respect
to the reflected Freudenthal simplicial division determined by \(T\).

By Proposition~\ref{prop:discordant-freudenthal-refinement}, this
reflected Freudenthal simplicial division is a coarsening of the
\(\frac12\mathrm{UJ}\) simplicial division.
Consequently, \(\tilde f\) is affine on every
\(\frac12\mathrm{UJ}\)-simplex.
Thus, \(\tilde f\) is the simplicial extension, with respect to the
\(\frac12\mathrm{UJ}\) simplicial division, of its restriction to
\(\{0,\frac12,1\}^n\).
Since \(\tilde f\) is convex,
Proposition~\ref{prop:half-uj-convex-extension-characterization}
implies that this restriction is
\(\frac12\mathrm{UJ}\)-convex.
This proves \(\textup{(ii)}\Rightarrow\textup{(i)}\).\qed
\end{pf}

\subsection{Integer-lattice characterization}
\label{subsec:integer-lattice-characterization}
We now extend the characterization of
Theorem~\ref{thm:functions-on-unit-hypercube}
from a single unit hypercube to the integer lattice.
In addition to $\mathrm{D}$-convexity of each unit hypercube
restriction, integral convexity is needed to ensure
the global convexity of the local convex extension.
This yields the following characterization.

\begin{theorem}[Functions on the integer lattice]
\label{thm:local-l-extendability-on-lattice}
Let \(f:\mathbb Z^n\to\mathbb R\cup\{+\infty\}\), and let
\(\tilde f\) be its local convex extension.
For \(z\in\mathbb Z^n\), define
\(f_z:\{0,1\}^n\to\mathbb R\cup\{+\infty\}\) by
\(f_z(u):=f(z+u)\).
Then the following conditions are equivalent.
\begin{enumerate}
\item[\textup{(i)}]
The function \(f\) is \(\mathrm{L}\)-extendable through its local convex extension \(\tilde f\); that is,
\(\tilde f|_{(\frac12\mathbb Z)^n}\) is \(\frac12\mathrm{UJ}\)-convex.

\item[\textup{(ii)}]
The function \(f\) is \(\mathrm{L}\)-extendable, and every
\(f_z\) is \(\mathrm{D}\)-convex.

\item[\textup{(iii)}]
The function \(f\) is integrally convex, and every
\(f_z\) is \(\mathrm{D}\)-convex.
\end{enumerate}
\end{theorem}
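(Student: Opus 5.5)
The plan is to prove the cycle \(\textup{(i)}\Rightarrow\textup{(ii)}\Rightarrow\textup{(iii)}\Rightarrow\textup{(i)}\).

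For \(\textup{(i)}\Rightarrow\textup{(ii)}\), the restriction \(g:=\tilde f|_{(\frac12\mathbb Z)^n}\) is a \(\frac12\mathrm{UJ}\)-convex function that agrees with \(f\) on \(\mathbb Z^n\), because \(\tilde f=f\) there. Hence \(f\) is \(\mathrm{L}\)-extendable. For \(\mathrm D\)-convexity of each \(f_z\), I would first show that the local convex extension commutes with restriction to a cube: for \(u\in[0,1]^n\) we have \(N(z+u)\subseteq z+\{0,1\}^n\), so \(\tilde f(z+u)=\widetilde{f_z}(u)\). Next, the points \(z+\{0,\frac12,1\}^n\) are closed under both roundings whenever \(x,y\) lie in them. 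Thus \(\widetilde{f_z}|_{\{0,\frac12,1\}^n}\), which is a translate of a restriction of \(g\), is \(\frac12\mathrm{UJ}\)-convex in the cube sense. Theorem~\ref{thm:functions-on-unit-hypercube} then gives that \(f_z\) is \(\mathrm D\)-convex. The step \(\textup{(ii)}\Rightarrow\textup{(iii)}\) is exactly Theorem~\ref{thm:l-extendability-implies-integral-convexity}.

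The substance lies in \(\textup{(iii)}\Rightarrow\textup{(i)}\). By Proposition~\ref{prop:half-uj-convex-extension-characterization}, it suffices to show that the \(\frac12\mathrm{UJ}\) simplicial extension \(\hat g\) of \(g:=\tilde f|_{(\frac12\mathbb Z)^n}\) is convex. I will aim to prove \(\hat g=\tilde f\) on \(\mathbb R^n\), after which convexity follows from integral convexity. On each unit cube \(z+[0,1]^n\), \(\tilde f\) coincides with the translate of \(\widetilde{f_z}\). Since \(f_z\) is \(\mathrm D\)-convex with respect to some \(T_z\), the argument in the proof of Theorem~\ref{thm:functions-on-unit-hypercube} \(\textup{(ii)}\Rightarrow\textup{(i)}\) applies cube by cube. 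There \(\widetilde{f_z}\) is the Lovász extension of \(f_z\circ\rho_{T_z}\), transported back, so it is affine on each simplex of the reflected Freudenthal division determined by \(T_z\). By the local form of Proposition~\ref{prop:discordant-freudenthal-refinement}, every \(\frac12\mathrm{UJ}\)-simplex in the cube lies in such a reflected Freudenthal simplex. Therefore \(\tilde f\) is affine on every \(\frac12\mathrm{UJ}\)-simplex in \(z+[0,1]^n\), and it is \(+\infty\) on such a simplex exactly when some vertex value is \(+\infty\). This last point also needs a check: \(\dom\widetilde{f_z}=\overline{\dom f_z}\), and this set is a union of reflected Freudenthal simplices. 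So a \(\frac12\mathrm{UJ}\)-simplex whose half-integral vertices all lie in \(\overline{\dom f_z}\) is itself contained in \(\overline{\dom f_z}\), because it sits inside one reflected Freudenthal simplex whose vertices all lie in \(\dom f_z\).

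Each point lies in the relative interior of a \(\frac12\mathrm{UJ}\)-simplex contained in some unit cube, so these cube-wise statements give \(\hat g=\tilde f\) everywhere. The point to stress is that no compatibility of the \(T_z\) across cubes is required. The reflections are used only inside each cube, and \(\tilde f\) is defined intrinsically, so it is automatically consistent on shared faces.

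The main obstacle I expect is the precise handling of infinite values and of lower-dimensional faces in the last step. A \(\frac12\mathrm{UJ}\)-simplex can lie on a common face of several cubes, and the reflected divisions from different cubes may differ there. I would argue on a single cube containing the simplex: \(\hat g\) on that simplex depends only on the values of \(g\) at its vertices. Whichever cube is used, \(\tilde f\) restricted to the simplex is affine with those vertex values, so it equals \(\hat g\). Finally, integral convexity gives convexity of \(\tilde f=\hat g\), and Proposition~\ref{prop:half-uj-convex-extension-characterization} yields \(\textup{(i)}\).
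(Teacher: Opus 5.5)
Your proposal is correct and follows essentially the same route as the paper. Both proofs run the cycle (i)$\Rightarrow$(ii)$\Rightarrow$(iii)$\Rightarrow$(i), using the identity $\widetilde{f_z}(u)=\tilde f(z+u)$ together with Theorem~\ref{thm:functions-on-unit-hypercube} for the first step, Theorem~\ref{thm:l-extendability-implies-integral-convexity} for the second, and, for the third, a cube-by-cube proof that $\tilde f$ is the $\frac12\mathrm{UJ}$ simplicial extension of its half-integral restriction followed by integral convexity and Proposition~\ref{prop:half-uj-convex-extension-characterization}; your extra checks (closure of $z+\{0,\frac12,1\}^n$ under the roundings, $+\infty$ values via $\dom\widetilde{f_z}=\overline{\dom f_z}$, and simplices on shared cube faces) fill in details the paper leaves implicit.
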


\begin{pf}
We first note that, for every \(z\in\mathbb Z^n\), the local convex
extension of \(f_z\) satisfies
\begin{equation}
\tilde{f_z}(x)=\tilde f(z+x)
\qquad
(x\in[0,1]^n).
\label{eq:local-extension-commutes-with-cube-restriction}
\end{equation}
Indeed, this follows directly from the definition of the local convex
extension as the convexification on each integral neighborhood
\cite[Section~3.2]{murota2023recent}, since
\(N(z+x)=z+(N(x)\cap\{0,1\}^n)\).

We first prove \(\textup{(i)}\Rightarrow\textup{(ii)}\).
Under \(\textup{(i)}\), the local convex extension itself gives an
\(\mathrm{L}\)-convex relaxation of \(f\).
Moreover, for every \(z\in\mathbb Z^n\),
\eqref{eq:local-extension-commutes-with-cube-restriction} shows that
the restriction of \(\tilde{f_z}\) to
\(\{0,\frac12,1\}^n\) is \(\frac12\mathrm{UJ}\)-convex.
Hence Theorem~\ref{thm:functions-on-unit-hypercube} implies that
\(f_z\) is \(\mathrm{D}\)-convex.

The implication
\(\textup{(ii)}\Rightarrow\textup{(iii)}\)
follows from
Theorem~\ref{thm:l-extendability-implies-integral-convexity}.

Finally, we prove
\(\textup{(iii)}\Rightarrow\textup{(i)}\).
For every \(z\in\mathbb Z^n\), the \(\mathrm{D}\)-convexity of
\(f_z\), together with
Theorem~\ref{thm:functions-on-unit-hypercube}, shows that
\(\tilde{f_z}\) is the simplicial extension of its restriction to
\(\{0,\frac12,1\}^n\) associated with the
\(\frac12\mathrm{UJ}\) simplicial division.
By \eqref{eq:local-extension-commutes-with-cube-restriction}, these
local descriptions agree with \(\tilde f\) on every unit hypercube.
Since the unit hypercube divisions are the restrictions of the global
\(\frac12\mathrm{UJ}\) simplicial division, \(\tilde f\) is the
simplicial extension of its restriction to
\((\frac12\mathbb Z)^n\).
Since \(f\) is integrally convex, \(\tilde f\) is convex on
\(\mathbb R^n\).
Therefore, by the equivalence of conditions \(\textup{(i)}\) and
\(\textup{(ii)}\) in
Proposition~\ref{prop:half-uj-convex-extension-characterization},
\(\tilde f|_{(\frac12\mathbb Z)^n}\) is
\(\frac12\mathrm{UJ}\)-convex.
\qed
\end{pf}

Unlike $\mathrm{D}$-convexity on the integer lattice,
$\mathrm{D}$-convexity of every unit hypercube restriction
does not require compatibility of the reflected Freudenthal
structures across unit hypercubes.
The following example shows that the class characterized in
Theorem~\ref{thm:local-l-extendability-on-lattice}
strictly contains the class of $\mathrm{D}$-convex functions
on the integer lattice.

\begin{figure}
  \centering
  \resizebox{0.7\linewidth}{!}{%
    \begin{tikzpicture}[
        x={(1.15cm,0cm)},
        y={(0.75cm,0.45cm)},
        z={(0cm,1.15cm)}
      ]

      \foreach \a in {-1,0,1} {
        \draw[gray!60]
          (\a,0,0)--({\a+1},0,0)--({\a+1},1,0)--(\a,1,0)--cycle
          (\a,0,1)--({\a+1},0,1)--({\a+1},1,1)--(\a,1,1)--cycle
          (\a,0,0)--(\a,0,1)
          ({\a+1},0,0)--({\a+1},0,1)
          ({\a+1},1,0)--({\a+1},1,1)
          (\a,1,0)--(\a,1,1);
      }

      \draw[->,semithick] (-1.3,0,0) -- (2.5,0,0)
        node[right] {$x_1$};

      \draw[->,semithick] (0,0,0) -- (0,1.5,0)
        node[above right] {$x_2$};

      \draw[->,semithick] (0,0,0) -- (0,0,1.5)
        node[above] {$x_3$};

      \foreach \x in {0,1}
        \foreach \y in {0,1}
          \foreach \z in {0,1}
            \fill (\x,\y,\z) circle (2.2pt);

      \fill (-1,0,0) circle (2.2pt);
      \fill (2,1,0) circle (2.2pt);

      \foreach \x/\y/\z in {
        -1/1/0, -1/0/1, -1/1/1,
         2/0/0,  2/0/1,  2/1/1
      }
        \draw[fill=white] (\x,\y,\z) circle (2.2pt);

      \node[below left] at (-1,0,0) {$(-1,0,0)$};
      \node[below right] at (2,1,0) {$(2,1,0)$};

    \end{tikzpicture}%
  }
  \caption{
    Black points represent
    $X_3=\{0,1\}^3\cup\{(-1,0,0),(2,1,0)\}$;
    white points represent the other vertices of the adjacent cubes.
  }
  \label{fig:X3}
\end{figure}

\begin{example}[\(\mathrm{D}\)-convexity of unit hypercube restrictions
need not extend to the integer lattice]
\label{ex:local-d-convex-not-global-d-convex}
Let
\(X_3:=\{0,1\}^3\cup\{(-1,0,0),(2,1,0)\}\subseteq\mathbb Z^3\),
and let
\(\delta_{X_3}:\mathbb Z^3\to\mathbb R\cup\{+\infty\}\)
be the indicator function of \(X_3\).
Figure~\ref{fig:X3} illustrates the set \(X_3\).
The set \(X_3\) is integrally convex.
Indeed, a direct check shows that \(\overline{X_3}\) is the union of the convex hulls of the points of \(X_3\) contained in the three unit hypercubes \([-1,0]\times[0,1]^2\), \([0,1]^3\), and \([1,2]\times[0,1]^2\).

We next verify the \(\mathrm{D}\)-convexity of the translated
unit hypercube intersections.
The only choices of \(z\in\mathbb Z^3\) for which the translated intersection
\(
\bigl(X_3\cap(z+\{0,1\}^3)\bigr)-z
\)
is \(3\)-dimensional are
\(z^-=(-1,0,0)\), \(z^0=(0,0,0)\), and \(z^+=(1,0,0)\).
The corresponding translated intersections are
\[
Y^-:=\{(0,0,0)\}\cup\bigl(\{1\}\times\{0,1\}^2\bigr),
\qquad
\{0,1\}^3,
\qquad
Y^+:=\bigl(\{0\}\times\{0,1\}^2\bigr)\cup\{(1,1,0)\},
\]
respectively.
The sets \(Y^-\) and \(\{0,1\}^3\) are closed under the usual
componentwise meet and join, while
\(\rho_{\{3\}}(Y^+)\) is closed under componentwise meet and join.
Thus, all three \(3\)-dimensional restrictions are
\(\mathrm{D}\)-convex.
The remaining nonempty translated intersections are, up to coordinate
permutations and reflections,
\(\{\mathbf 0\}\), \(\{\mathbf 0,e_1\}\),
\(\{\mathbf 0,e_1,e_1+e_2\}\), or
\(\{\mathbf 0,e_1,e_2,e_1+e_2\}\).
Each of these sets is closed under componentwise meet and join, and
hence every remaining nonempty restriction is \(\mathrm{D}\)-convex.
Consequently, by
Theorem~\ref{thm:local-l-extendability-on-lattice}, the restriction of
the local convex extension of \(\delta_{X_3}\) to
\((\frac12\mathbb Z)^3\) is
\(\frac12\mathrm{UJ}\)-convex.

We now show that \(\delta_{X_3}\) is not \(\mathrm{D}\)-convex.
Suppose, to the contrary, that the reflected Freudenthal structures on
the unit hypercubes can be chosen compatibly.
First consider the left unit hypercube.
The set \(Y^-\) is \(3\)-dimensional, and its unique pair of opposite
vertices is
\(
(0,0,0), (1,1,1).
\)
If \(T^-\subseteq \{1, 2, 3\}\) is such that \(\rho_{T^-}(Y^-)\) is closed under componentwise meet and join, then the componentwise minimum and maximum of \(\rho_{T^-}(Y^-)\) are \(\mathbf 0\) and \(\mathbf 1\).
Their inverse images under \(\rho_{T^-}\) therefore form an opposite
pair in \(Y^-\).
By the uniqueness of this pair, we must have \(T^-=\emptyset\) or \(T^-=\{1, 2, 3\}\).
Both choices induce, on the face \(\{1\}\times[0,1]^2\) of the translated left cube, the diagonal joining \((1,0,0)\) and \((1,1,1)\).
Thus, in the original coordinates, the left cube requires the diagonal
joining
\(
(0,0,0)~\text{and}~(0,1,1)
\)
on the face \(\{0\}\times[0,1]^2\) of the central cube.
Next consider the right unit hypercube.
The set \(Y^+\) is \(3\)-dimensional, and its unique pair of opposite
vertices is
\(
(0,0,1), (1,1,0).
\)
The same argument shows that, if
\(\rho_{T^+}(Y^+)\) is closed under componentwise meet and join, then
\(T^+=\{3\}\) or \(T^+=\{1,2\}\).
Both choices induce, on the face
\(\{0\}\times[0,1]^2\) of the translated right cube, the diagonal
joining \((0,0,1)\) and \((0,1,0)\).
Thus, in the original coordinates, the right cube requires the diagonal
joining
\(
(1,0,1)~\text{and}~(1,1,0)
\)
on the face \(\{1\}\times[0,1]^2\) of the central cube.
The left and right cubes therefore require different diagonal types on the two opposite square faces of the central cube.
In a reflected Freudenthal triangulation of the central cube determined by a set \(T^0\subseteq \{1, 2, 3\}\), however, the diagonal type on each of these faces is determined by the same set \(T^0\cap\{2,3\}\).
More precisely, the two faces have one diagonal type when
\(\lvert T^0\cap\{2,3\}\rvert\) is even and the other diagonal type when it is odd.
Hence, the central cube cannot simultaneously realize the diagonal required by the left cube on \(\{0\}\times[0,1]^2\) and the diagonal required by the right cube on \(\{1\}\times[0,1]^2\).
Therefore, the local reflected Freudenthal structures cannot be chosen compatibly over \(\mathbb R^3\), and \(\delta_{X_3}\) is not \(\mathrm{D}\)-convex.
\end{example}

The inclusion relations among the function classes considered above
are summarized in Figure~\ref{fig:global-function-classes}.
All the inclusions shown in the figure are strict.

\begin{figure}
\centering
\includegraphics[width=.75\linewidth]{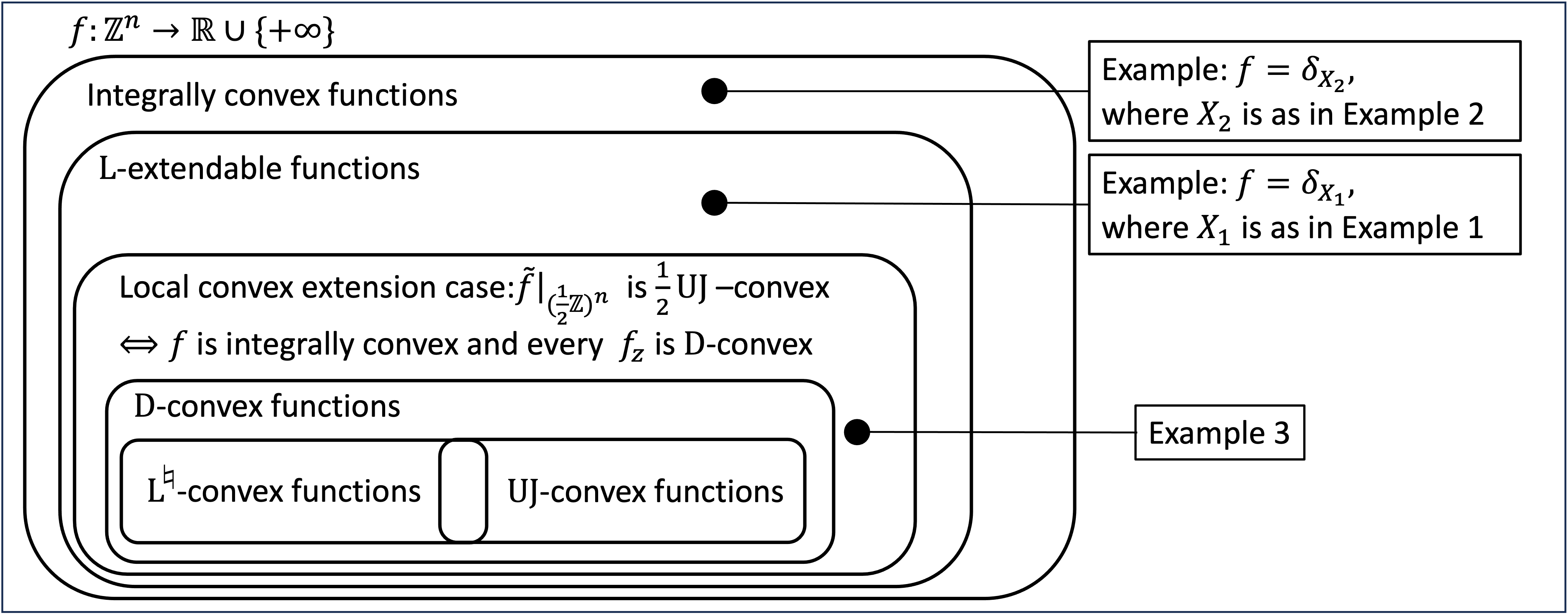}
\caption{
Relations among classes of functions on \(\mathbb Z^n\).
Every \(\mathrm{L}\)-extendable function is integrally convex by
Theorem~\ref{thm:l-extendability-implies-integral-convexity}, and the
inclusion is strict by
Example~\ref{ex:relative-opposite-not-majority}.
The class of functions that are \(\mathrm{L}\)-extendable through their
local convex extensions is characterized by
Theorem~\ref{thm:local-l-extendability-on-lattice}.
Example~\ref{ex:standard-simplex} shows that this class is
strictly smaller than the class of all \(\mathrm{L}\)-extendable
functions, while
Example~\ref{ex:local-d-convex-not-global-d-convex} shows that it is
strictly larger than the class of functions that are
\(\mathrm{D}\)-convex on the integer lattice.
}
\label{fig:global-function-classes}
\end{figure}


\section{Conclusion}
\label{sec:conclusion}

We showed that every $\mathrm{L}$-extendable function
on the integer lattice is integrally convex.
When the relaxation is prescribed to be the local convex
extension, we showed that, on a unit hypercube, this
property is equivalent to $\mathrm{D}$-convexity, and that,
on the integer lattice, it is equivalent to integral
convexity together with $\mathrm{D}$-convexity of every
unit hypercube restriction.
We also showed that this class strictly contains the
class of $\mathrm{D}$-convex functions on the integer lattice.
As a byproduct, we obtained a combinatorial characterization
of $\mathrm{D}$-convex subsets of the vertices of a unit
hypercube: such a nonempty set is $\mathrm{D}$-convex if
and only if it is majority-closed and contains a relative opposite pair.

Several questions remain open.
The main one is to characterize the integrally convex functions that
are $\mathrm{L}$-extendable.
It is also of interest to characterize when the reflecting sets
witnessing $\mathrm{D}$-convexity of the unit hypercube restrictions
can be chosen compatibly so that they determine a discordant
Freudenthal simplicial division of $\mathbb R^n$, and to develop
algorithms for testing the conditions obtained in this paper.

\appendix
\numberwithin{equation}{section}

\section{Rounding Relations for
Theorem~\ref{thm:l-extendability-implies-integral-convexity}}
\label{app:rounding-relations-l-extendability}

\begin{applemma}
\label{lem:rounding-relations-for-l-extendability}
Let \(x,y\in\mathbb Z^n\), and put
\(
z:=\frac{x+y}{2},
~a:=\frac{x+z}{2},
~b:=\frac{z+y}{2}.
\)
Define
\[
a^-:=\left[ a\right]_{\frac12},\qquad
a^+:=\left[ a\right]_{1},\qquad
b^-:=\left[ b\right]_{\frac12},\qquad
b^+:=\left[ b\right]_{1};\qquad
r:=\frac{a^++b^-}{2},\qquad
s:=\frac{a^-+b^+}{2}.
\]
Then the following hold:
\[
\textup{(i)}~[ r]_{\frac12}
=[ s]_{\frac12}=z;
\qquad
\textup{(ii)}~[r]_{1},
[s]_{1}
\in\mathbb Z^n\cap N(z);
\qquad
\textup{(iii)}~[r]_{1}
+[s]_{1}=2z.
\]
\end{applemma}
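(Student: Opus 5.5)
The plan is to exploit that every operation in the statement acts coordinatewise: the midpoints $z,a,b,r,s$, the roundings $[\cdot]_{\frac12}$ and $[\cdot]_1$, and membership in $N(z)$ (since $N(z)$ is a product of the sets $\{\lfloor z_i\rfloor,\dots,\lceil z_i\rceil\}$). So it suffices to prove (i)--(iii) for $n=1$, with $x,y\in\mathbb Z$. I will use two symmetries of the rounding maps on $\frac14\mathbb Z$, both immediate from \eqref{eq:half-uj-roundings}:
\begin{itemize}
\item they commute with translations by integers, $[u+k]_\bullet=[u]_\bullet+k$ for $k\in\mathbb Z$;
\item they commute with the reflection $u\mapsto 1-u$, since "nearest integer" and "nearest nonintegral half-integer" are preserved by this reflection.
\end{itemize}
Set $t:=(y-x)/4$, so that $a=z-t$ and $b=z+t$.

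\emph{Case 1: $y-x$ is even.} Then $z\in\mathbb Z$ and $t\in\frac12\mathbb Z$, so $a,b\in\frac12\mathbb Z$. Both roundings fix them, giving $a^\pm=a$ and $b^\pm=b$. Hence $r=s=(a+b)/2=z$, and all roundings of $r,s$ equal $z$. Then (i) holds, $[r]_1=[s]_1=z\in\mathbb Z\cap N(z)$ gives (ii), and their sum is $2z$, giving (iii).

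\emph{Case 2: $y-x$ is odd.} Then $z$ is a nonintegral half-integer and $t\in\frac14\mathbb Z\setminus\frac12\mathbb Z$. By integer translation, I will assume $z=\frac12$. Then $a+b=1$, so $b=1-a$, and the reflection symmetry gives $b^-=1-a^-$ and $b^+=1-a^+$. Consequently
\[
r=\tfrac12+\tfrac12(a^+-a^-),\qquad s=\tfrac12-\tfrac12(a^+-a^-)=1-r.
\]
Since $a$ is a quarter point that is not a half-integer, its nearest integer and its nearest nonintegral half-integer lie on opposite sides of $a$, each at distance $\frac14$. Hence $a^+-a^-=\pm\frac12$, and $\{r,s\}=\{\frac14,\frac34\}$. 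Direct evaluation gives:
\begin{itemize}
\item $[\frac14]_{\frac12}=[\frac34]_{\frac12}=\frac12=z$, which is (i);
\item $\{[\frac14]_1,[\frac34]_1\}=\{0,1\}=\mathbb Z\cap N(\frac12)$, which is (ii);
\item $0+1=1=2z$, which is (iii).
\end{itemize}

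There is no real obstacle. The only points needing care are justifying the reduction to one coordinate and checking the distance claim $|a^+-a^-|=\frac12$. The latter follows by inspecting the two residues $a\equiv\frac14$ and $a\equiv\frac34\pmod 1$.
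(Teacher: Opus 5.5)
Your proof is correct and follows essentially the same route as the paper. Both reduce coordinatewise, split on whether $z_i$ is an integer, show $\{r_i,s_i\}=\{z_i-\frac14,z_i+\frac14\}$ in the half-integral case, and then evaluate the roundings directly. The only difference is how $\{r_i,s_i\}$ is determined: you normalize to $z_i=\frac12$ and use the reflection symmetry $[1-u]_\bullet=1-[u]_\bullet$, whereas the paper combines $|r_i-z_i|\le\frac14$, $r_i,s_i\in\frac14\mathbb Z\setminus\frac12\mathbb Z$, and $r_i+s_i=2z_i$.
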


\begin{pf}
We verify the assertions componentwise.  Fix \(i\in[n]\).  Since
\(x_i+y_i=2z_i\), we have
\[
a_i+b_i
=
\frac{x_i+2z_i+y_i}{2}
=
2z_i.
\]

Suppose first that \(z_i\in\mathbb Z\).  Then
\(a_i,b_i\in\frac12\mathbb Z\), so the
\(\frac12\mathrm{UJ}\)-roundings leave both points unchanged.  Hence
\(a_i^-=a_i^+=a_i\) and \(b_i^-=b_i^+=b_i\), and therefore
\(
r_i=s_i=(a_i+b_i)/2=z_i.
\)
It follows immediately that
\(
\left[ r_i\right]_{\frac12}
=
\left[ s_i\right]_{\frac12}
=
z_i,
\)
that their upper roundings both equal \(z_i\), and that these upper roundings sum to \(2z_i\).

Suppose next that
\(z_i\in\frac12\mathbb Z\setminus\mathbb Z\).  Since \(x_i,y_i\in\mathbb
Z\), we then have
\(
a_i,b_i\in
\frac14\mathbb Z\setminus\frac12\mathbb Z.
\)
For a point in
\(\frac14\mathbb Z\setminus\frac12\mathbb Z\), its lower
\(\frac12\mathrm{UJ}\)-rounding is half-integral and nonintegral, whereas
its upper \(\frac12\mathrm{UJ}\)-rounding is integral.  Thus
\(r_i,s_i\in\frac14\mathbb Z\setminus\frac12\mathbb Z\).

Since \(a_i+b_i=2z_i\), we have
\[
r_i-z_i
=
\frac12\bigl[(a_i^+-a_i)+(b_i^--b_i)\bigr].
\]
Each term in brackets has absolute value \(1/4\), and hence
\(|r_i-z_i|\le 1/4\).  Similarly,
\[
s_i-z_i
=
\frac12\bigl[(a_i^--a_i)+(b_i^+-b_i)\bigr],
\]
and therefore \(|s_i-z_i|\le 1/4\).  On the other hand,
\(r_i-z_i\) and \(s_i-z_i\) belong to
\(\frac14\mathbb Z\setminus\frac12\mathbb Z\).  Consequently,
\[
|r_i-z_i|
=
|s_i-z_i|
=
\frac14.
\]

The two \(\frac12\mathrm{UJ}\)-roundings of a quarter-integral point add
up to twice that point.  Hence
\(a_i^-+a_i^+=2a_i\) and \(b_i^-+b_i^+=2b_i\), and thus
\(
r_i+s_i
=
(a_i^-+a_i^++b_i^-+b_i^+)/2
=
a_i+b_i
=
2z_i.
\)
Together with the preceding distance equalities, this gives
\[
\{r_i,s_i\}
=
\left\{
z_i-\frac14,\,
z_i+\frac14
\right\}.
\]

By the definition of the \(\frac12\mathrm{UJ}\)-roundings,
\[
\left[ z_i-\frac14\right]_{\frac12}
=
\left[ z_i+\frac14\right]_{\frac12}
=
z_i,
\]
while the corresponding upper roundings are
\(z_i-\frac12\) and \(z_i+\frac12\).  Therefore
\[
\left[ r_i\right]_{\frac12}
=
\left[ s_i\right]_{\frac12}
=
z_i.
\]
Moreover,
\([ r_i]_{1}\) and
\([ s_i]_{1}\) are integers at distance
\(1/2\) from \(z_i\), and hence belong to the \(i\)-th coordinate of
\(N(z)\).  Their sum is \(2z_i\).
Since \(i\in[n]\) was arbitrary, assertions \textup{(i)}--\textup{(iii)} follow.\qed
\end{pf}

%
\section*{Declaration of generative AI and AI-assisted technologies in the manuscript preparation process}

During the preparation of this work, Ken Yokoyama used ChatGPT, developed
by OpenAI, to assist with the writing and revision of the manuscript and
with checking the presentation of mathematical arguments and proofs.
The author reviewed and edited all AI-assisted content, independently
verified the mathematical statements and proofs, and takes full responsibility for the content of the published article.

\section*{Acknowledgements}
The authors thank Satoru Fujishige for suggesting the research direction pursued in this paper.

The first and third authors, Yokoyama and Kimura, were partially supported by JST ERATO, Grant Number JPMJER2301.
The first author, Yokoyama, was also supported by JST BOOST, Japan, Grant Number JPMJBS2406.
The second author, Iwamasa, was supported by JSPS KAKENHI Grant Numbers JP22K17854, JP24K21315, and JP24K02901.

\bibliographystyle{plainnat}
\bibliography{reference.bib}

@article{fujishige2014bisubmodular,
  title={Bisubmodular polyhedra, simplicial divisions, and discrete convexity},
  author={Fujishige, Satoru},
  journal={Discrete Optimization},
  volume={12},
  pages={115--120},
  year={2014}
}

@book{murota2003discrete,
  title={Discrete Convex Analysis},
  author={Murota, Kazuo},
  year={2003},
  publisher={SIAM}
}

@article{hirai2018convexity,
  title={L-convexity on graph structures},
  author={Hirai, Hiroshi},
  journal={Journal of the Operations Research Society of Japan},
  volume={61},
  number={1},
  pages={71--109},
  year={2018}
}

@article{murota2023recent,
  title={Recent progress on integrally convex functions},
  author={Murota, Kazuo and Tamura, Akihisa},
  journal={Japan Journal of Industrial and Applied Mathematics},
  volume={40},
  number={3},
  pages={1445--1499},
  year={2023}
}

@article{fujishige2022discrete,
  author  = {Fujishige, Satoru and Tardella, Fabio},
  title = {Discrete {2}-convex functions},
  journal = {Mathematical Programming},
  volume  = {195},
  number  = {1--2},
  pages   = {831--854},
  year    = {2022}
}

@inproceedings{huber2012towards,
  author    = {Huber, Anna and Kolmogorov, Vladimir},
  title     = {Towards minimizing {$k$}-submodular functions},
  booktitle = {Proceedings of the 2nd International Symposium on
               Combinatorial Optimization (ISCO)},
  pages     = {451--462},
  year      = {2012}
}

@article{hirai2016k,
  title={On k-submodular relaxation},
  author={Hirai, Hiroshi and Iwamasa, Yuni},
  journal={SIAM Journal on Discrete Mathematics},
  volume={30},
  number={3},
  pages={1726--1736},
  year={2016}
}

@article{FavatiTardella1990,
  author  = {Favati, Paola and Tardella, Fabio},
  title   = {Convexity in nonlinear integer programming},
  journal = {Ricerca Operativa},
  volume  = {53},
  pages   = {3--44},
  year    = {1990}
}

@article{moriguchi2020discrete,
  author  = {Moriguchi, Satoko and Murota, Kazuo and Tamura, Akihisa and Tardella, Fabio},
  title = {Discrete midpoint convexity},
  journal = {Mathematics of Operations Research},
  volume  = {45},
  number  = {1},
  pages   = {99--128},
  year    = {2020}
}

@article{hirai2015extendable,
  author  = {Hirai, Hiroshi},
  title   = {\(\mathrm{L}\)-extendable functions and a proximity scaling
             algorithm for minimum cost multiflow problem},
  journal = {Discrete Optimization},
  volume  = {18},
  pages   = {1--37},
  year    = {2015}
}

@book{fujishige2005submodular,
  author    = {Fujishige, Satoru},
  title     = {Submodular Functions and Optimization},
  edition   = {2},
  publisher = {Elsevier},
  year      = {2005}
}

@article{IimuraMurotaTamura2005,
  author  = {Iimura, Takuya and Murota, Kazuo and Tamura, Akihisa},
  title   = {Discrete Fixed Point Theorem Reconsidered},
  journal = {Journal of Mathematical Economics},
  volume  = {41},
  number  = {8},
  pages   = {1030--1036},
  year    = {2005}
}

@article{huber2014skew,
  author  = {Anna Huber and Andrei Krokhin and Robert Powell},
  title   = {Skew Bisubmodularity and Valued {CSP}s},
  journal = {SIAM Journal on Computing},
  volume  = {43},
  number  = {3},
  pages   = {1064--1084},
  year    = {2014}
}

@inproceedings{kolmogorov2011submodularity,
  author    = {Vladimir Kolmogorov},
  title     = {Submodularity on a Tree: Unifying
               {$\mathrm{L}^\natural$}-Convex and Bisubmodular Functions},
  booktitle = {Proceedings of the 36th International Symposium on
               Mathematical Foundations of Computer Science (MFCS)},
  pages     = {400--411},
  year      = {2011}
}

@inproceedings{gridchyn2013potts,
  author    = {Igor Gridchyn and Vladimir Kolmogorov},
  title     = {Potts Model, Parametric Maxflow and
               {$k$}-Submodular Functions},
  booktitle = {Proceedings of the IEEE International Conference on
               Computer Vision (ICCV)},
  pages     = {2320--2327},
  year      = {2013}
}

@article{iwata2016half,
  author  = {Yoichi Iwata and Magnus Wahlstr{\"o}m and Yuichi Yoshida},
  title   = {Half-Integrality, {LP}-Branching, and {FPT} Algorithms},
  journal = {SIAM Journal on Computing},
  volume  = {45},
  number  = {4},
  pages   = {1377--1411},
  year    = {2016}
}

@article{kimura2024neighborhood,
  author  = {Kei Kimura and Kotaro Nakayama},
  title = {Neighborhood persistency of the linear optimization
         relaxation of integer linear optimization},
  journal = {Mathematical Programming},
  volume  = {214},
  pages   = {133--151},
  year    = {2025}
}

@article{iwamasa2026towards,
  author  = {Yuni Iwamasa},
  title   = {Towards Discrete Convex Analysis over Classical Root Systems},
  journal = {{arXiv} preprint {arXiv}:2608.16090},
  year    = {2026}
}

@article{hirai2016discrete,
  author  = {Hiroshi Hirai},
  title   = {Discrete Convexity and Polynomial Solvability in Minimum 0-Extension Problems},
  journal = {Mathematical Programming},
  volume  = {155},
  number  = {1--2},
  pages   = {1--55},
  year    = {2016}
}

@incollection{todd1977union,
  author    = {Todd, Michael J.},
  title     = {Union Jack Triangulations},
  booktitle = {Fixed Points: Algorithms and Applications},
  publisher = {Academic Press},
  pages     = {315--336},
  year      = {1977}
}

@article{freudenthal1942simplicial,
  author  = {Freudenthal, Hans},
  title   = {Simplizialzerlegungen von beschr{\"a}nkter Flachheit},
  journal = {Annals of Mathematics},
  volume  = {43},
  number  = {3},
  pages   = {580--582},
  year    = {1942}
}

\end{document}